\newtheorem{theorem}{Theorem}[section]
\newtheorem{lemma}[theorem]{Lemma}
\newtheorem{corollary}[theorem]{Corollary}
\newtheorem{proposition}[theorem]{Proposition}
\theoremstyle{definition}
\newtheorem{definition}[theorem]{Definition}
\theoremstyle{remark}
\newtheorem{remark}[theorem]{Remark}
\numberwithin{equation}{section}
\newenvironment{PfOfThmPressure}[1]
{\par\vskip2\parsep\noindent{\sc Proof of Theorem\ \ref{Thm:Pressure}. }}{{\hfill
$\Box$}
\par\vskip2\parsep}
\newenvironment{PfOfThmEscapeRate}[1]
{\par\vskip2\parsep\noindent{\sc Proof of Theorem\ \ref{Thm:EscapeRate}. }}{{\hfill
$\Box$}
\par\vskip2\parsep}
\def\R{\mathbb{R}}
\def\N{\mathbb{N}}
\DeclareMathOperator{\Var}{Var}
\begin{document}

\title[Pressure for random SFTs]{Pressure and escape rates for random subshifts of finite type}


\author{Kevin McGoff}
\address{9201 University City Blvd. \\
Charlotte, NC 28223}
\curraddr{}
\email{kmcgoff1@uncc.edu}
\thanks{}


\subjclass[2010]{Primary: 37B10}

\date{}

\begin{abstract}
In this work we consider several aspects of the thermodynamic formalism in a randomized setting. Let $X$ be a non-trivial mixing shift of finite type, and let $f : X \to \R$ be a H\"{o}lder continuous potential with associated Gibbs measure $\mu$. Further, fix a parameter $\alpha \in (0,1)$. For each $n \geq 1$, let $\mathcal{F}_n$ be a random subset of words of length $n$, where each word of length $n$ that appears in $X$ is included in $\mathcal{F}_n$ with probability $1-\alpha$ (and excluded with probability $\alpha$), independently of all other words. Then let $Y_n = Y(\mathcal{F}_n)$ be the random subshift of finite type obtained by forbidding the words in $\mathcal{F}_n$ from $X$. In our first main result, for $\alpha$ sufficiently close to $1$ and $n$ tending to infinity, we show that the pressure of $f$ on $Y_n$ converges in probability to the value $P_X(f) + \log(\alpha)$, where $P_X(f)$ is the pressure of $f$ on $X$.  Additionally, let $H_n = H(\mathcal{F}_n)$ be the random hole in $X$ consisting of the union of the cylinder sets of the words in $\mathcal{F}_n$.  For our second main result, for $\alpha$ sufficiently close to one and $n$ tending to infinity, we show that the escape rate of $\mu$-mass through $H_n$ converges in probability to the value $-\log(\alpha)$ as $n$ tends to infinity. 
\end{abstract}


\maketitle


\section{Introduction}

Random subshifts of finite type were introduced in \cite{McGoff2012}, and they have subsequently been studied in \cite{Broderick2017, McGoffPavlov2018, McGoffPavlov2016}. Let us quickly recall their definition. Let $X$ be a non-trivial mixing subshift of finite type (SFT), and let $B_n(X)$ be the set of words of length $n$ that appear in $X$. For a fixed parameter $\alpha \in (0,1)$ and $n \in \N$, let $\mathcal{F}_{n}$ be the randomly selected subset of $B_n(X)$ formed by including each word from $B_n(X)$ in $\mathcal{F}_n$ with probability $1-\alpha$ (and excluding it from $\mathcal{F}_n$ with probability $\alpha$), independently of all other words. Then let $Y_n = Y(\mathcal{F}_n)$ be the set of points in $X$ that do not contain any word from $\mathcal{F}_n$. We refer to $Y_n$ as a random SFT.
In order to study random SFTs, we fix the ambient system $X$ and the parameter $\alpha$. Then we seek to describe the properties of $Y_n$ that have probability tending to one as $n$ tends to infinity. This framework gives a precise way to describe the behavior of ``typical" SFTs within the ambient system $X$.

Previous work on random SFTs \cite{Broderick2017, McGoff2012, McGoffPavlov2018, McGoffPavlov2016} has established the existence of at least one critical value $\alpha_c$ such that the typical behavior of $Y_n$ changes abruptly as $\alpha$ crosses this value. Indeed, when $\alpha < \alpha_c$, there is a positive limit for the probability that $Y_n$ is empty, and  $Y_n$ has zero entropy with probability tending to one. On the other hand, for $\alpha > \alpha_c$, the probability that $Y_n$ is empty tends to zero, and the entropy of $Y_n$ converges in probability to the value $h(X) + \log(\alpha)$, where $h(X)$ is the entropy of $X$.  (Note that this value is positive for $\alpha > \alpha_c$.) Furthermore, for $\alpha$ close enough to one, $Y_n$ contains a unique ``giant component," which is itself a mixing SFT with full entropy, as well as a random number of isolated periodic orbits. See \cite{McGoff2012} for details.

In the present work, we study some aspects of the thermodynamic formalism for random SFTs in the super-critical regime ($\alpha > \alpha_c$). In our first main result (Theorem \ref{Thm:Pressure}), we describe the distribution of the pressure of random SFTs for a fixed potential function, and in our second main result (Theorem \ref{Thm:EscapeRate}), we describe the distribution of the escape rate of mass of Gibbs measures through random holes. Although these two topics may not at first appear to be related, they are in fact quite closely connected, as demonstrated by Proposition \ref{Prop:Crayola}.

\subsection{Pressure of random SFTs}

 Suppose that $f : X \to \R$ is a fixed H\"{o}lder continuous potential function. We seek to identify the limiting behavior of the pressure of $f$ restricted to the random SFT $Y_n$ in the limit as $n$ tends to infinity. For notation, let $P_Y(f)$ denote the topological pressure of $f$ restricted to any subshift $Y \subset X$ (see Section \ref{Sect:Background} for definitions). 

%

\begin{theorem} \label{Thm:Pressure}
Let $X$ be a non-trivial mixing SFT and $f : X \to \R$ H\"{o}lder continuous. Then there exists $\gamma_0 \in (0,1)$ such that for each $\alpha \in (\gamma_0,1]$ and for each $\epsilon >0$, there exists $\rho >0$ such that for all large enough $n$,
\begin{equation*}
 \mathbb{P}_{\alpha}\biggl( \bigl| P_{Y_n}(f) - \bigl( P_X(f) + \log( \alpha) \bigr) \bigr| \geq \epsilon \biggr) < e^{-\rho n}.
\end{equation*}
\end{theorem}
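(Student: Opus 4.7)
The plan is to split the theorem into matching upper and lower bounds on $P_{Y_n}(f)$, each holding with probability at least $1 - \tfrac{1}{2} e^{-\rho n}$. Throughout, the relevant finite-level quantities are the restricted partition functions
\begin{equation*}
Z_m(f, Y_n) \;=\; \sum_{w \in B_m(Y_n)} \sup_{x \in [w] \cap Y_n} e^{S_m f(x)},
\end{equation*}
whose normalized logarithm tends to $P_{Y_n}(f)$ as $m \to \infty$.

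For the upper bound, I would use the quasi-subadditivity of $\log Z_m(f, \cdot)$, valid up to a constant controlled by the H\"older variation of $f$, which yields $P_{Y_n}(f) \leq \tfrac{1}{m} \log Z_m(f, Y_n) + O(1/m)$ for every $m$. Choosing $m$ as a function of $n$ that grows polynomially but satisfies $m/n \to \infty$, I would estimate $\mathbb{E}[Z_m(f, Y_n)]$ by writing, for each $w \in B_m(X)$, $\mathbb{P}[w \in B_m(Y_n)] \leq \alpha^{K(w)}$, where $K(w)$ is the number of distinct length-$n$ subwords of $w$; these survival events are genuinely independent since they concern distinct words in $B_n(X)$. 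Standard Gibbs statistics for $\mu$ force $K(w) \geq (1-\delta)(m-n+1)$ off a set of $\mu$-mass exponentially small in $n$, producing
\begin{equation*}
\mathbb{E}[Z_m(f, Y_n)] \;\leq\; \alpha^{(1-\delta)(m-n+1)} Z_m(f, X) + \text{(negligible)},
\end{equation*}
so dividing by $m$ and sending $m/n \to \infty$ yields $P_X(f) + (1-\delta)\log\alpha$, which absorbs into $P_X(f) + \log\alpha + \epsilon$ for $\delta$ small. The concentration of $\log Z_m(f, Y_n)$ around its mean would follow from McDiarmid's inequality applied to the independent Bernoulli indicators $\mathbf{1}_{u \in \mathcal{F}_n}$ over $u \in B_n(X)$: flipping a single indicator changes $Z_m$ by at most the weighted count of length-$m$ words containing $u$, which is controlled using $\mu([u]) \cdot Z_m(f, X)$ together with the decay of the Gibbs cylinder masses. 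A careful bookkeeping of the Lipschitz constants delivers the desired $e^{-\rho n}$ rate.

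The lower bound is the main obstacle, since here one cannot just majorize something---one needs structure inside the random object $Y_n$. The plan is to construct, with probability $1 - \tfrac{1}{2} e^{-\rho n}$, a shift-invariant probability measure $\nu_n$ on $Y_n$ satisfying $h(\nu_n) + \int f\,d\nu_n \geq P_X(f) + \log\alpha - \epsilon$; the variational principle then closes the argument. The natural candidate is the equilibrium state of $f$ restricted to the giant mixing component of $Y_n$, whose existence (for $\alpha$ sufficiently close to $1$) is provided by \cite{McGoff2012}; this measure inherits the statistics of $\mu$ up to controlled corrections coming from conditioning on survival, and its free energy can be compared directly to $P_X(f) + \log\alpha$. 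An attractive alternative, strongly suggested by the paper's remark that the two main theorems are linked via Proposition \ref{Prop:Crayola}, is to first prove the escape-rate statement in Theorem \ref{Thm:EscapeRate}---a concrete assertion about $\mu$-mass of cylinders surviving the hole $H_n$, amenable to a direct first- and second-moment computation---and then transfer the escape-rate bound into the pressure lower bound via Proposition \ref{Prop:Crayola}.
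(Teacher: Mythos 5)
Your outline matches the paper's high-level architecture (random partition function for an upper bound, a structured subfamily for a lower bound, then concentration), and your first-moment computation for the upper bound is on track: the paper's Lemma \ref{Lemma:NPS} is precisely the repeat-count estimate you invoke, and the paper also works with a length $k=k(n)$ that grows faster than $n$. However, there are two genuine gaps.

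First, the concentration step. McDiarmid's inequality applied to $Z_m(f,Y_n)$ fails quantitatively. Flipping the indicator of a single word $u\in B_n(X)$ can change $Z_m$ by roughly $c_u\approx K\,m\,e^{P_X(f)m}\mu([u])$ (the partition-function mass of the length-$m$ words containing $u$). Summing squares and using $\sum_u\mu(u)^2\le\max_u\mu(u)\le\gamma^n$ gives $\sum_u c_u^2 \lesssim m^2 e^{2P_X(f)m}\gamma^n$. The expected value of $Z_m(f,Y_n)$ is of order $\alpha^{m-n+1}e^{P_X(f)m}$, so to rule out a deviation comparable to the mean one needs the exponent $t^2/\sum c_u^2\approx\alpha^{2(m-n)}/(m^2\gamma^n)$ to be large. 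But $\log\alpha<0$ and $m/n\to\infty$, so $\alpha^{2(m-n)}$ decays much faster than $\gamma^{-n}$ grows, and the McDiarmid bound is vacuous. The worst-case bounded-differences constants are simply too large at the relevant scale. The paper instead runs a second-moment (Chebyshev) argument, and the reason the variance is manageable is combinatorial: $\Cov(\xi_u,\xi_v)=0$ unless $W_n(u)\cap W_n(v)\ne\varnothing$, and Lemmas \ref{Lemma:Super}, \ref{Lemma:Love}, \ref{Lemma:Peace} show via repeat covers that such overlapping pairs carry exponentially small $\mu\otimes\mu$-mass. That structural input is what makes the variance small; it is not visible to McDiarmid.

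Second, the lower bound. Plan A (equilibrium state of the giant component) relies on \cite{McGoff2012}, but the giant-component result there is stated for $\alpha$ ``close enough to $1$,'' which is in general a stronger hypothesis than $\alpha>\gamma_0$ with $\gamma_0$ as in Definition \ref{Def:Gamma}; moreover, passing from ``the giant component inherits the statistics of $\mu$'' to the quantitative pressure bound $h(\nu_n)+\int f\,d\nu_n\ge P_X(f)+\log\alpha-\epsilon$ is precisely the content one would have to prove, not a given. The paper instead uses a direct construction: $\psi_{n,k}$ sums over words of length $k$ that begin and end with the same entropy-typical $n$-block (the set $G_{n,k}$), so that allowed such words can be freely concatenated inside $Y_n$ and the pressure can be bounded below by a sub-multiplicativity argument (Lemma \ref{Lemma:LB}). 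Plan B is essentially the paper's argument run in reverse: Proposition \ref{Prop:Crayola} reduces the escape rate statement to the pressure statement on the survivor set, but proving the escape-rate lower bound directly still requires showing that $\sum_{w\in B_k(X,H_n)}e^{S_kf(w)}$ is large with high probability and that this forces $P_{Y_n}(f)$ to be large, which is again the content of Proposition \ref{Prop:Crayola} plus the $\psi_{n,k}$ construction. So Plan B does not avoid the work; it reorders it.

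In short: your first-moment estimates are aligned with the paper, but you need to replace McDiarmid by a second-moment bound built on the covariance/repeat-cover analysis, and you need a concrete lower-bound construction such as the paper's $G_{n,k}$ rather than appealing to the giant component.
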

In other words, the pressure of $f$ on the random SFT $Y_n$ converges in probability to the value $P_X(f) + \log(\alpha)$. Note that when $f \equiv 0$, we recover the result of \cite{McGoff2012} regarding the entropy of random SFTs. 

\begin{remark}
The particular value of $\gamma_0$ that we use in our proof is given in Definition \ref{Def:Gamma}. It is possible that this value of $\gamma_0$ is not optimal, in the sense that for some choices of $X$ and $f$, the statement might remain true with a smaller value of $\gamma_0$. However, one may check that for $f \equiv 0$, our definition of $\gamma_0$ is equal to $\alpha_c$, which is optimal.
\end{remark}

\begin{remark}
The proof of Theorem \ref{Thm:Pressure} appears in Section \ref{Sect:MainProof}. The broad outline of this proof is similar to the outline of the proof of \cite[Theorem 1.3]{McGoff2012} concerning the entropy of random SFTs. However, the core technical results in the proof, which appear in Section \ref{Sect:RepeatProofs}, require new ideas to handle the fact that $f$ may not be zero. In particular, we must estimate the $\mu$-measure of the appearance of certain types of repeated patterns, where $\mu$ is a Gibbs measure but not necessarily a measure of maximal entropy, and this generalization requires substantially new ideas.
\end{remark}

\subsection{Escape rate through random holes}

Our second main result involves thinking of the random set of forbidden words $\mathcal{F}_n$ as a hole in the ambient system $X$, creating an open dynamical system. For an introduction to open systems, see \cite{DemersYoung} and references therein. Previous work on open systems has focused largely on the existence and properties of the escape rate of mass through the hole, as well as the existence and properties of conditionally invariant distributions (conditional on avoiding the hole); for an incomplete sampling of the literature on open systems, see
\cite{BDM2010,Bunimovich2011,CM2,CM1,CMT1,CMT2,CvdB,CMS1994,CMS1996,CMS1997,DemersFernandez2016,DWY2010,D2005a,D2005b,Demers2014,DIMMY2017,DemersWright2012,DWY2012,FP2012,FroylandStancevic2010,Keller2012,LMD,LopesM,PianigianiYorke1979}.
In this work we focus on the escape rate of mass through the hole, which we define below.
Let $\sigma : X \to X$ denote the left-shift map on $X$. For a Borel probability measure $\mu$ on $X$ and a hole $H \subset X$, the escape rate of $\mu$ through $H$ is defined to be $- \varrho(\mu : H)$, where
\begin{equation*}
\varrho(\mu : H) = \lim_m \frac{1}{m} \log \mu \biggl( \bigl\{ x \in X : \forall k \in \{0,\dots,m-1\}, \, \sigma^k(x) \notin H \bigr\} \biggr),
\end{equation*} 
whenever the limit exists. 

 In this work we consider random holes $H_n$, constructed from the set of forbidden words $\mathcal{F}_n$ as follows:
\begin{equation*}
H_n = \bigcup_{w \in \mathcal{F}_n} \bigl\{ x \in X : x_0 \dots x_{n-1} = w \bigr\}.
\end{equation*}
Our goal is to describe the escape rate of mass of Gibbs measures through $H_n$. 
\begin{theorem} \label{Thm:EscapeRate}
Let $X$ be a non-trivial mixing SFT and let $\mu$ be the Gibbs measure associated to the H\"{o}lder continuous potential function $f : X \to \R$. Then there exists $\gamma_0 \in (0,1)$ such that for each $\alpha \in (\gamma_0,1]$ and for each $\epsilon >0$, there exists $\rho >0$ such that for all large enough $n$,
\begin{equation*}
 \mathbb{P} \biggl( \bigl| \varrho(\mu : H_n) - \log(\alpha) \bigr| \geq \epsilon \biggr) < e^{-\rho n}.
\end{equation*}
\end{theorem}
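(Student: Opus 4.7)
The plan is to derive Theorem \ref{Thm:EscapeRate} from Theorem \ref{Thm:Pressure} through the deterministic identity
\[
\varrho(\mu : H_n) = P_{Y_n}(f) - P_X(f),
\]
which I expect Proposition \ref{Prop:Crayola} to supply. Given this identity, the events $\{|\varrho(\mu : H_n) - \log(\alpha)| \geq \epsilon\}$ and $\{|P_{Y_n}(f) - (P_X(f) + \log(\alpha))| \geq \epsilon\}$ coincide, so Theorem \ref{Thm:Pressure} immediately delivers the desired exponential bound with the same $\gamma_0$.

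The starting point for the identity is the set-theoretic observation that $\sigma^k(x) \notin H_n$ for all $k \in \{0,\dots,m-1\}$ if and only if the word $x_0 \dots x_{m+n-2}$ avoids every element of $\mathcal{F}_n$ as a length-$n$ subword; equivalently, this word lies in the language $B_{m+n-1}(Y_n)$. Hence the survivor set at time $m$ is a disjoint union of cylinders of length $m+n-1$ with $\mu$-mass $\sum_{w \in B_{m+n-1}(Y_n)} \mu([w])$. I would then apply the Gibbs property of $\mu$: there is a constant $C \geq 1$ depending only on $X$ and $f$ such that for every word $w$ of length $\ell$ appearing in $X$ and every $x \in [w]$,
\[
C^{-1} \exp\biggl( \sum_{k=0}^{\ell-1} f(\sigma^k x) - \ell P_X(f) \biggr)
\leq \mu([w]) \leq
C \exp\biggl( \sum_{k=0}^{\ell-1} f(\sigma^k x) - \ell P_X(f) \biggr).
\]
Substituting into the survivor-mass sum, taking $\tfrac{1}{m}\log$, and sending $m \to \infty$, the constant $C$ washes out, the prefactor $(m+n-1)/m$ tends to $1$, and the remaining partition-function-like sum over $B_{m+n-1}(Y_n)$ has exponential growth rate equal to $P_{Y_n}(f)$ by the definition of pressure. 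This simultaneously establishes existence of the limit defining $\varrho(\mu : H_n)$ and the identity above.

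The main obstacle is that $Y_n$ is not in general irreducible, so one must justify the convergence of the $f$-weighted exponential growth rate of $|B_\ell(Y_n)|$-indexed sums for the relevant realizations of $Y_n$. Fortunately, on the high-probability event underlying Theorem \ref{Thm:Pressure}, the results of \cite{McGoff2012} guarantee that $Y_n$ decomposes into a mixing giant component carrying almost all of the complexity, plus at most subexponentially many isolated periodic orbits; standard Ruelle--Perron--Frobenius theory on the giant component, combined with the Gibbs bounds above, should supply the limit with error that is uniform enough in $n$ to preserve the exponential probability estimate. Carrying out this uniformity carefully---so that the concentration from Theorem \ref{Thm:Pressure} transfers intact to $\varrho(\mu : H_n)$ without losing the exponential-in-$n$ probability decay---is where I expect the real technical work to lie.
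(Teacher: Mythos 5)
Your top-level reduction is exactly the paper's: Proposition \ref{Prop:Crayola} gives the deterministic identity $\varrho(\mu:H_n) = P_{Y_n}(f) - P_X(f)$, the two deviation events coincide, and Theorem \ref{Thm:Pressure} finishes the argument. That much is correct and is the entire content of the paper's proof of Theorem \ref{Thm:EscapeRate}.

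However, your sketch of the identity itself contains a gap, and the obstacle you flag is not the right one. A point $x$ with $\sigma^k(x) \notin H_n$ for $k = 0,\dots,m-1$ has $x_0 \dots x_{m+n-2}$ containing no subword from $\mathcal{F}_n$; this means $x_0\dots x_{m+n-2}$ lies in what the paper calls $B_{m+n-1}(X,H_n)$, \emph{not} in $B_{m+n-1}(Y_n)$. The language of the survivor set $Y_n$ consists only of those words that extend to a bi-infinite sequence avoiding $\mathcal{F}_n$, and in general $B_k(Y_n) \subsetneq B_k(X,H_n)$: a word can be locally admissible without being extendable. So the Gibbs estimate and the $\frac{1}{m}\log$ limit only immediately give $\varrho(\mu:H_n) = -P_X(f) + \lim_k \frac{1}{k}\log\sum_{w\in B_k(X,H_n)} e^{S_k f(w)}$, and identifying this last quantity with $P_{Y_n}(f)$ is the real work. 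Irreducibility of $Y_n$ is a red herring here, and appealing to the giant-component structure from \cite{McGoff2012} is worse than unnecessary: that structure only holds on a high-probability event, which would turn the deterministic identity into a probabilistic one and spoil the clean transfer of the concentration bound. The paper avoids all of this by proving the missing inequality $\limsup_k \frac{1}{k}\log\sum_{w\in B_k(X,H)} e^{S_k f(w)} \leq P_Y(f)$ via a Ledrappier--Walters-type construction: build weak$^*$ limit measures from empirical measures on maximizing points of $B_k(X,H)$, show any such limit $\nu$ is supported on $Y$, and invoke upper semicontinuity of entropy plus the variational principle on $Y$. This works for every realization of the hole, with no mixing hypothesis on $Y$.
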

Thus the escape rate of $\mu$ through the randomly selected hole $H_n$ converges in probability to $-\log(\alpha)$. 

\begin{remark} \label{Rmk:BigHoles}
With probability tending to one, the hole $H_n$ consists of the union of approximately $(1-\alpha) |B_n(X)|$ cylinder sets of length $n$. One may think of $H_n$ as typically consisting of the union of many small holes spread randomly throughout the state space. Furthermore, the expected value of the $\mu$-measure of $H_n$ in $1-\alpha$, which remains bounded away from zero as $n$ tends to infinity. In this sense, the holes considered here differ substantially from the ``small holes" studied in some previous work \cite{Bunimovich2011,DWY2010,Demers2014,FP2012}.
\end{remark}

\begin{remark}
In \cite{Bunimovich2011}, the authors prove that in the deterministic setting, the escape rate depends on both the size (measure) of the hole and its precise location in state space. In contrast, Theorem \ref{Thm:EscapeRate} shows that for random holes, the escape rate is well approximated by a function that depends only on the expected measure of the hole. Indeed, the expected measure of the hole is $(1-\alpha)$ (see Remark \ref{Rmk:Measure}), so the expected measure of its complement is $\alpha$. Then Theorem \ref{Thm:EscapeRate} yields that the escape rate converges in probability to the value $-\log(\alpha)$.
\end{remark}

\begin{remark}
A naive approximation of the hitting time for the hole is given by a geometrically distributed random variable $\tau$ with probability of success $p = 1-\alpha$. Since $\mathbb{P}(\tau = k) = \alpha^{k-1} (1-\alpha)$, we have that
\begin{equation*}
\lim_k \frac{1}{k} \log \mathbb{P}(\tau > k) = \log \alpha.
\end{equation*}
From this perspective, Theorem \ref{Thm:EscapeRate} may be interpreted as giving precise meaning to the statement that the escape rate through the random hole is approximately the same what one would obtain if the hitting time of the hole were geometrically distributed with probability of success equal to the measure of the hole.
\end{remark}

As a consequence of Theorem \ref{Thm:EscapeRate}, we can also estimate the escape rate of mass of Gibbs measures for Axiom A diffeomorphisms through randomly selected Markov holes. As the proof relies solely on Theorem \ref{Thm:EscapeRate} and the well-known relationship between Markov partitions for Axiom A diffeomorphisms and SFTs (see \cite{Bowen1975}), we omit the proof.
\begin{corollary} \label{Cor:AxiomA}
Let $T : M \to M$ be an Axiom A diffeomorphism such that the restriction of $T$ to its non-wandering set is topologically mixing, and let $f : M \to \R$ be a H\"{o}lder continuous potential with Gibbs measure $\nu$. Let $\xi$ be a finite Markov partition of $M$ with diameter small enough that the symbolic dynamics is well-defined, and let $\xi^n = \vee_{k = 0}^{n-1} T^{-k} \xi$. Then there exists $\gamma_0 \in (0,1)$ such that for each $\alpha \in (\gamma_0,1]$, the following holds. Let $H_n$ be the randomly selected hole obtained by including each cell of $\xi^n$ independently with probability $1-\alpha$. Then for each $\epsilon >0$, there exists $\rho >0$ such that for large enough $n$,
\begin{equation*}
 \mathbb{P} \biggl( \bigl| \varrho(\nu : H_n) - \log(\alpha) \bigr| \geq \epsilon \biggr) < e^{-\rho n}.
\end{equation*}
\end{corollary}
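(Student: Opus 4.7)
\textbf{Proposal for the proof of Corollary~\ref{Cor:AxiomA}.}
The plan is to reduce the statement directly to Theorem~\ref{Thm:EscapeRate} via Bowen's symbolic coding of the non-wandering set. First, I would invoke \cite{Bowen1975} to obtain a mixing SFT $X$ together with a H\"{o}lder continuous, finite-to-one, surjective factor map $\pi : X \to \Omega$ intertwining the shift $\sigma$ on $X$ with $T$ on the non-wandering set $\Omega$. By the choice of $\xi$, each word $w \in B_n(X)$ corresponds bijectively to the cell $[w] := \bigcap_{k=0}^{n-1} T^{-k} \xi_{w_k}$ of $\xi^n$, and every cell of $\xi^n$ arises this way. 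The Gibbs measure $\nu$ for $f$ on $\Omega$ will pull back to the Gibbs measure $\mu$ for the H\"{o}lder potential $\tilde f := f \circ \pi$ on $X$, with $\nu = \pi_* \mu$; moreover, $\pi$ becomes a measurable isomorphism after removing a $\mu$-null set consisting of the orbit of cell boundaries.

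Second, I would transport the random hole through the coding. Labeling the cells of $\xi^n$ by their corresponding words, including each cell independently with probability $1 - \alpha$ produces a random set $\mathcal{F}_n \subset B_n(X)$ with precisely the distribution used in Theorem~\ref{Thm:EscapeRate}, and the symbolic hole
\begin{equation*}
H_n^X = \bigcup_{w \in \mathcal{F}_n} \bigl\{ x \in X : x_0 \dots x_{n-1} = w \bigr\}
\end{equation*}
satisfies $\pi^{-1}(H_n) = H_n^X$ modulo a $\mu$-null set. Combining this with $\pi \circ \sigma = T \circ \pi$ and $\nu = \pi_* \mu$, I will conclude that for every $m \in \N$,
\begin{equation*}
\nu \bigl( \{ y \in \Omega : T^k y \notin H_n, \, 0 \leq k < m \} \bigr) = \mu \bigl( \{ x \in X : \sigma^k x \notin H_n^X, \, 0 \leq k < m \} \bigr).
\end{equation*}
Dividing by $m$, taking logs, and passing to the limit then gives $\varrho(\nu : H_n) = \varrho(\mu : H_n^X)$ on the event that either limit exists.

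The final step is to apply Theorem~\ref{Thm:EscapeRate} to $(X, \sigma, \tilde f, \mu)$, which directly yields the concentration $|\varrho(\mu : H_n^X) - \log \alpha| < \epsilon$ with probability at least $1 - e^{-\rho n}$ for all large $n$. The one technical point requiring attention is the familiar non-injectivity of $\pi$ on the boundaries of the Markov cells (and their iterates), but because $\mu$ is a Gibbs measure and these sets are measurable with $\mu$-measure zero, the identifications above hold up to $\mu$-null sets and no error accumulates at the exponential scale of the escape rate. I expect this measure-zero boundary bookkeeping to be the only mild obstacle; the rest is an essentially verbatim transfer through the coding.
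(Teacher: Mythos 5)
Your proposal is correct and matches the paper's approach exactly: the paper explicitly omits this proof, stating that it ``relies solely on Theorem~\ref{Thm:EscapeRate} and the well-known relationship between Markov partitions for Axiom A diffeomorphisms and SFTs (see \cite{Bowen1975}),'' which is precisely the Bowen-coding transfer argument you spell out. The only detail worth making explicit is that the $\gamma_0$ in the corollary is taken to be $\gamma_0(X, f\circ\pi)$ from Definition~\ref{Def:Gamma} applied to the symbolic system.
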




\subsection{Outline of the paper}

The following section collects some background definitions and results that are used elsewhere in the paper.
In Section \ref{Sect:MainProof}, we present the proof Theorem \ref{Thm:Pressure} with the help of several technical lemmas.
These technical lemmas are then proved in Sections \ref{Sect:RepeatProofs}, \ref{Sect:MomentBounds}, and \ref{Sect:UBandLB}.
Finally, in Section \ref{Sect:Connections}, we establish Proposition \ref{Prop:Crayola}, which relates escape rates to pressure, and then we prove Theorem \ref{Thm:EscapeRate}.

\section{Background and notation} \label{Sect:Background}

\subsection{Symbolic dynamics}
Let $\mathcal{A}$ be a finite set, which we call the alphabet. We let $\Sigma = \mathcal{A}^{\mathbb{Z}}$ denote the full-shift, and we let $\sigma : \Sigma \to \Sigma$ be the left-shift map, $\sigma(x)_n = x_{n+1}$. We endow $\Sigma$ with the product topology from the discrete topology on $\mathcal{A}$, which makes $\sigma$ a homeomorphism. We define the metric $d( \cdot, \cdot)$ on $\Sigma$ by the rule $d(x,y) = 2^{-n(x,y)}$, where $n(x,y)$ is the infimum of all $|m|$ such that $x_m \neq y_m$.

A subset $X \subset \Sigma$ is a \textit{subshift} if it is closed and $\sigma(X) = X$. In the context of a subshift $X$, we also use the symbol $\sigma$ to denote the restriction of the left-shift to $X$. A \textit{word} on $\mathcal{A}$ is an element of $\mathcal{A}^m$ for some $m \geq 1$. We also refer to the empty word as a word. If $u = u_1 \dots u_m$ is in $\mathcal{A}^m$, then we say that $u$ has length $m$, and we let $u_i^j$ denote the subword $u_i \dots u_j$. Further, we let $B_m(X)$ denote the set of words of length $m$ that appear in some point in $X$. For any word $w$ in $B_m(X)$, we let $[w]$ denote the set of points $x \in X$ such that $x_0 \dots x_{m-1} = w$. Also, for $x \in X$ and $i \leq j$, we let $x[i,j]$ denote the set of points $y \in X$ such that $y_i \dots y_j = x_i \dots x_j$. 

A subset $X \subset \Sigma$ is a \textit{subshift of finite type} (SFT) if there exists a natural number $m$ and a collection of words $\mathcal{F} \subset \mathcal{A}^m$ such that $X$ is exactly the set of points in $\Sigma$ that contain no words from $\mathcal{F}$. We say that an SFT is non-trivial if it contains at least two points. The SFT $X$ is mixing if there exists $N$ such that for all points $x,y \in X$, there exists a point $z \in X$ such that $x(-\infty,0] = z(-\infty,0]$ and $y[N,\infty) = z[N,\infty)$.

For any subshift $X \subset \Sigma$, we let $M(X,\sigma)$ denote the set of Borel probability measures $\mu$ on $X$ such that $\mu(\sigma^{-1}A) = \mu(A)$ for all Borel sets $A \subset X$. Suppose $\mu \in M(X,\sigma)$. When it will not cause confusion, we write $\mu(w)$ to denote the measure of the cylinder set $[w]$, where $w \in B_m(X)$ for some $m \geq 1$.

For any measure $\mu \in M(X,\sigma)$, one may define the entropy of $\mu$ as
\begin{equation*}
h(\mu) = \lim_m \frac{1}{m} \sum_{w \in B_m(X)} - \mu(w) \log \mu(w),
\end{equation*}
where the limit exists by subadditivity.

\subsection{Pressure and equilibrium states} \label{Sect:BackgroundPressure}

Let $Y$ be a subshift, and let $f : Y \to \R$ be continuous. For $m \geq 1$ and $w$ in $B_m(Y)$, let
\begin{equation*}
S_m f(w) = \sup_{x \in Y \cap [w]} \sum_{k=0}^{m-1} f \circ \sigma^k (x).
\end{equation*}
Then let
\begin{equation*}
\Lambda_m(Y) = \sum_{w \in B_m(Y)} e^{S_m f(w)}.
\end{equation*}
Finally, the (topological) pressure of $f$ on $Y$ is defined as
\begin{equation*}
 P_Y(f) = \lim_{m \to \infty} \frac{1}{m} \log \Lambda_m(Y),
\end{equation*}
where the limit exists by subadditivity. 

The well-known Variational Principle (see \cite{W}) states that
\begin{equation*}
P_Y(f) = \sup \biggl\{ \int f \, d\mu + h(\mu) : \mu \in M(Y,\sigma) \biggr\}.
\end{equation*}
For a subshift $Y$, this supremum must be realized, and any measure that attains the supremum is known as an equilibrium state for $f$ on $Y$.


Now suppose that $X$ is a mixing SFT and $f: X \to \R$ is H\"{o}lder continuous. In this case, it is known that there is a unique equilibrium state $\mu \in M(X,\sigma)$ for $f$, and furthermore $\mu$ satisfies the following Gibbs property: there exists $K>1$ such that for all $n \geq 1$ and $x \in X$,
\begin{equation} \label{Eqn:GibbsProp}
K^{-1} \leq \frac{ \mu \bigl( x[0,n-1] \bigr) }{ \exp\bigl( -P_X(f) \cdot n + \sum_{k=0}^{n-1} f \circ \sigma^k(x) \bigr) } \leq K.
\end{equation}

We may now give a definition for the parameter $\gamma_0$ that appears in Theorems \ref{Thm:Pressure} and \ref{Thm:EscapeRate}. 
\begin{definition} \label{Def:Gamma}
Let $X$ be a non-empty mixing SFT, and let $f : X \to \R$ be a H\"{o}lder continuous potential with associated Gibbs measure $\mu$. 
Then let $\gamma_0 = \gamma_0(X,f)$ be defined by
\begin{equation*}
\gamma_0 = \inf\Bigl\{ \gamma > 0 : \exists n_0, \forall m \geq n_0, \forall u \in B_m(X), \, \mu(u) \leq \gamma^m \Bigr\}.
\end{equation*}
\end{definition}

Note that by \cite[Lemma 5]{Abadi}, if $X$ is non-trivial, then $\gamma_0 < 1$. We also make the following remark. Suppose $1 \geq \gamma > \gamma_0$, and fix $n_0$ such that $\mu(u) \leq \gamma^{|u|}$ whenever $|u| \geq n_0$. Then for any word $u$ we have $\mu(u) \leq \gamma^{|u| - n_0}$; indeed, if $|u| \geq n_0$, then it follows from the choice of $n_0$, and if $|u| \leq n_0$, then $\mu(u) \leq 1 \leq \gamma^{|u| - n_0}$. 

It is well-known (see, \textit{e.g.}, \cite[Proof of Proposition 1.14]{Bowen1975}) that $\mu$ satisfies a mixing property called $\psi$-mixing, from which a variety of mixing-type estimates may be deduced. The bounds required for the present work are summarized in the following lemma, which we state without proof.
\begin{lemma} \label{Lemma:Oscar}
Let $X$ be a non-trivial mixing SFT with H\"{o}lder continuous potential $f : X \to \R$ and associated Gibbs measure $\mu$. 
Then there exist constants $K >0$ and $g_0 \geq 1$ such that:
\begin{itemize}
\item the Gibbs property (\ref{Eqn:GibbsProp}) holds;
\item for all $m,n \geq 1$ and for all $u \in B_m(X)$ and $v \in B_n(X)$ such that $uv \in B_{m+n}(X)$, we have
\begin{equation*}
\mu(uv) \leq K \mu(u) \mu(v);
\end{equation*}
\item for all $m,n \geq 1$ and for all $u \in B_m(X)$ and $v \in B_n(X)$ such that $uv \in B_{m+n}(X)$, we have
\begin{equation*}
\mu \bigl( \sigma^{-m}[v] \mid [u] \bigr) \leq K \mu \bigl( [v] \bigr);
\end{equation*}
\item for $g \geq g_0$, for all $m,n \geq 1$ and for all $u \in B_m(X)$ and $v \in B_n(X)$, we have
\begin{equation*}
\mu\Bigl( [u] \bigcap \sigma^{-g+m}[v]\Bigr) \geq K^{-1} \mu( [u] ) \mu( [v] ).
\end{equation*}
\end{itemize}
\end{lemma}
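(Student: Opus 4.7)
The lemma collects the Gibbs property together with two-sided $\psi$-mixing bounds for the equilibrium measure, all of which are classical consequences of the Ruelle-Perron-Frobenius theorem. My plan is to prove the four bullets in three stages.

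First, I would establish the Gibbs property (\ref{Eqn:GibbsProp}) via the transfer operator $\mathcal{L}_f g(x) = \sum_{\sigma y = x} e^{f(y)} g(y)$ (after a higher-block recoding to make $f$ depend only on forward coordinates). The Perron-Frobenius theorem in the H\"older setting produces a strictly positive H\"older eigenfunction $h$ and eigenmeasure $\nu$ with eigenvalue $e^{P_X(f)}$; then $\mu = h \nu$ is the equilibrium state. The Gibbs constant $K$ arises from two contributions: the bounded range of $h$ (which is positive and continuous on a compact space) and the uniform Bowen-type estimate $|S_n f(x) - S_n f(y)| \leq D$ whenever $x, y$ lie in a common length-$n$ cylinder, which follows by summing the H\"older variations of $f$ geometrically.

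Second, I would derive the upper multiplicative bounds (the second and third bullets) directly from the Gibbs property. For concatenable $u \in B_m(X)$ and $v \in B_n(X)$, pick any $x \in [uv]$; then $x \in [u]$, $\sigma^m x \in [v]$, and $S_{m+n} f(x) = S_m f(x) + S_n f(\sigma^m x)$ exactly. Applying (\ref{Eqn:GibbsProp}) three times gives
\begin{equation*}
\mu(uv) \leq K \, e^{-P_X(f)(m+n) + S_m f(x) + S_n f(\sigma^m x)} \leq K^{3}\, \mu(u) \, \mu(v),
\end{equation*}
proving the second bullet after relabeling the constant. The third bullet is the same inequality divided by $\mu([u])$, since $[u] \cap \sigma^{-m}[v] = [uv]$.

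Third, for the lower bound of the fourth bullet, I would invoke the spectral gap of $\mathcal{L}_f$ on the space of H\"older continuous functions: there exist $C > 0$ and $\lambda \in (0,1)$ such that $\bigl\| \mathcal{L}_f^{g} \mathbf{1}_{[v]}/h - \nu([v]) \bigr\|_\infty \leq C \lambda^g \, \nu([v])$ for every cylinder $[v]$. Combined with the identification $\mu = h \nu$ and a duality argument testing against $\mathbf{1}_{[u]}$, this translates into a uniform $\psi$-mixing bound
\begin{equation*}
\bigl| \mu\bigl([u] \cap \sigma^{-(m+g)}[v]\bigr) - \mu([u])\, \mu([v]) \bigr| \leq C' \lambda^g \, \mu([u])\, \mu([v])
\end{equation*}
valid for all words $u,v$ and all gaps $g \geq 0$. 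Choosing $g_0$ so that $C' \lambda^{g_0} \leq 1/2$ then yields the stated lower bound (with a suitably adjusted $K$). I expect this last step to be the main obstacle: establishing the spectral gap and converting the operator-theoretic decay into a multiplicative mixing estimate requires the uniform positivity of $h$ and careful bookkeeping of constants, but for H\"older potentials on mixing SFTs all of this is standard and available, e.g., in Bowen and Parry-Pollicott.
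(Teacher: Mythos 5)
The paper states Lemma~\ref{Lemma:Oscar} \emph{without proof}, deferring to the standard $\psi$-mixing theory for Gibbs measures on mixing SFTs with the pointer to Bowen's monograph; so there is no in-paper argument to compare against, only a citation. Your proposal reconstructs exactly the route that citation points to: the Ruelle--Perron--Frobenius transfer operator gives the Gibbs property, the second and third bullets then follow from three applications of (\ref{Eqn:GibbsProp}) together with the identity $[u]\cap\sigma^{-m}[v]=[uv]$ and the exact additivity $S_{m+n}f(x)=S_mf(x)+S_nf(\sigma^m x)$, and the fourth bullet is the uniform lower $\psi$-mixing bound obtained from the spectral gap of $\mathcal{L}_f$ on H\"older functions. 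These deductions are all correct, and the constants combine exactly as you indicate (one may take the final $K$ to be the cube of the Gibbs constant, enlarged once more to absorb the $\psi$-mixing constant and the choice of $g_0$). The only thing worth flagging is cosmetic: your mixing estimate is stated with the shift $\sigma^{-(m+g)}$, which is the form that makes ``$g\geq g_0$ uniformly'' literally correct (gap of size $g$ between the end of $u$ and the start of $v$); the lemma as printed writes $\sigma^{-g+m}$, which is most naturally read the same way after relabeling and is in any case applied in the paper only when the separation is large. Your reading is the sensible one and the proof is sound.
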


\subsection{Basics of random SFTs} \label{Sect:IntroRSFTs}

Let $X$ be a non-trivial mixing SFT. Fix $\alpha \in (0,1)$. Recall that $\mathcal{F}_n$ denotes the random subset of $B_n(X)$ formed by including each word with probability $1-\alpha$, independently of all other words, and $Y_n = Y(\mathcal{F}_n)$ is the random SFT formed by forbidding the words $\mathcal{F}_n$ from $X$. Here we establish some notation and basic facts for random SFTs.

Let $u \in B_k(X)$ for some $k \geq n$. We let $W_n(u)$ denote the set of all words of length $n$ that appear in $u$:
\begin{equation*}
W_n(u) = \bigl\{ v \in B_n(X) : \exists j \in \{1,\dots,k-n+1\}, \, u_{j}^{j+n-1} = v \bigr\}.
\end{equation*}
Then let $\xi_u$ denote the indicator function of the event that $u$ contains no words from $\mathcal{F}_n$, \textit{i.e.},
\begin{equation*}
\xi_u = \left\{ \begin{array}{ll}
                          1, & \text{ if } W_n(u) \cap \mathcal{F}_n = \varnothing \\
                          0, & \text{ otherwise}.
                     \end{array} \right.
\end{equation*} 
Since each word in $W_n(u)$ is excluded from $\mathcal{F}_n$ with probability $\alpha$, independently of all other words, we have that
\begin{equation} \label{Eqn:BasicExp}
\mathbb{E}\bigl[ \xi_u \bigr] = \mathbb{P} \bigl( W_n(u) \cap \mathcal{F}_n = \varnothing \bigr) = \alpha^{|W_n(u)|}.
\end{equation}
Furthermore, for $u,v \in B_k(X)$, the covariance of $\xi_u$ and $\xi_v$ is given by
\begin{align} \label{Eqn:BasicVar}
\begin{split}
\mathbb{E}\biggl[ \bigl(\xi_u - \mathbb{E}[\xi_u] \bigr) \bigl( \xi_v - \mathbb{E}[\xi_v] \bigr) \biggr] & = \mathbb{E} \bigl[ \xi_u \xi_v \bigr] - \mathbb{E}\bigl[ \xi_u \bigr] \mathbb{E} \bigl[ \xi_v \bigr]  \\
& = \alpha^{|W_n(u) \cup W_n(v)|} - \alpha^{|W_n(u)| + |W_n(v)|} \\
& =   \alpha^{|W_n(u) \cup W_n(v)|} \bigl( 1- \alpha^{|W_n(u) \cap W_n(v)|} \bigr) .
\end{split}
\end{align}

\begin{remark} \label{Rmk:Measure}
Let $X$ be a non-trivial mixing SFT, and let $\mu \in M(X,\sigma)$. Then the expected value of the $\mu$-measure of the hole $H_n$ is $1-\alpha$, since
\begin{align*}
\mathbb{E} \bigl[ \mu(H_n) \bigr] & = \mathbb{E}\Biggl[ \sum_{u \in B_n(X)} \mu(u) (1- \xi_u) \Biggr] \\
& = \sum_{u \in B_n(X)} \mu(u) (1- \mathbb{E}\bigl[ \xi_u \bigr] ) = (1- \alpha) \, \mu(B_n(X)) = 1- \alpha.
\end{align*}
\end{remark}

\subsection{Repeats and repeat covers}
We use interval notation to denote intervals in $\mathbb{Z}$. For example, $[1,3] = \{1,2,3\}$ and $[0,5) = \{0,1,2,3,4\}$. Furthermore, for a set $F \subset \mathbb{Z}$ and $t \in \mathbb{Z}$, we let $t+F = \{ t+s : s \in F\}$. For $n$ in $\mathbb{N}$ and $F \subset \mathbb{Z}$, we let $\mathcal{C}_n(F)$ denote the set of intervals of length $n$ contained in $F$:
\begin{equation*}
\mathcal{C}_n(F) = \bigl\{ t + [0,n) : t+[0,n)\subset F \bigr\}.
\end{equation*}
We will also consider sets of pairs of intervals; that is, we consider sets $\mathcal{R} \subset \mathcal{C}_n(F) \times \mathcal{C}_n(F)$. For such $\mathcal{R}$, we let $|\mathcal{R}|$ denote the number of pairs in $\mathcal{R}$, and we let
\begin{equation*}
A(\mathcal{R}) = \bigcup_{(I_1,I_2) \in \mathcal{R}} I_2.
\end{equation*}
Now we define \textit{repeats} and \textit{repeat covers}, which were used implicitly in \cite{McGoff2012} and then defined explicitly in \cite{McGoffPavlov2016}.
\begin{definition}
Let $\mathcal{A}$ be a finite set. Let $F \subset \mathbb{Z}$, and let $u \in \mathcal{A}^F$. A pair $(I_1,I_2)$ in $\mathcal{C}_n(F) \times \mathcal{C}_n(F)$ is an $n$-\textit{repeat} (or just a \textit{repeat})  for $u$ if $u_{I_1} = u_{I_2}$ and $I_1$ is the lexicographically minimal occurrence of the word $u_{I_1}$ in $u$. In that case, the word $u_{I_1}$ is called a repeated word for $u$. Furthermore, a set $\mathcal{R} \subset \mathcal{C}_n(F) \times \mathcal{C}_n(F)$ is a \textit{repeat cover} for $u$ if
\begin{enumerate}
\item each pair $(I_1,I_2) \in \mathcal{R}$ is a repeat for $u$, and
\item for each repeat $(I_1,I_2)$ for $u$, we have $I_2 \subset A(\mathcal{R})$.
\end{enumerate}
\end{definition}

Note that every pattern $u \in \mathcal{A}^F$ has a repeat cover, which contains all repeats for $u$. However, in many cases, we seek to find more efficient repeat covers, by which we mean repeat covers $\mathcal{R}$ such that $|\mathcal{R}|$ is small enough for our purposes. In this paper, we only require the bound supplied by the following lemma, which is  a slightly weaker version of Lemma 3.8 in \cite{McGoffPavlov2016}.
\begin{lemma} \label{Lemma:EfficientRepeatCovers}
Let $F \subset \mathbb{Z}$ be a finite union of intervals of length $n$, and suppose $u \in \mathcal{A}^F$. Then there exists an $n$-repeat cover $\mathcal{R}$ for $u$ such that
\begin{equation*}
\bigl|  \mathcal{R} \bigr| \leq 4|F|/n.
\end{equation*}
\end{lemma}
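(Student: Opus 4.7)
The plan is to reduce the construction of an efficient repeat cover to a one-dimensional interval covering problem, to be solved by a greedy algorithm componentwise. First I would let $T \subseteq \mathbb{Z}$ denote the set of all $t$ such that $t + [0,n) \subseteq F$ and $t + [0,n)$ is not the lexicographically minimal occurrence in $u$ of the word $u_{t+[0,n)}$; equivalently, $T$ is the set of starting positions of intervals that appear as the second coordinate of some $n$-repeat for $u$. Producing a repeat cover $\mathcal{R}$ of size $k$ is then equivalent to choosing a set $R \subseteq T$ of size $k$ whose associated intervals $\{[t,t+n) : t \in R\}$ cover $U := \bigcup_{t \in T} [t,t+n)$, noting that $U \subseteq F$.

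Next, I would decompose $U$ into its maximal contiguous components $U^{(1)}, \dots, U^{(p)}$, writing $U^{(j)} = [a_j, b_j)$ with length $L_j = b_j - a_j \geq n$. Disjointness inside $F$ gives $\sum_j L_j = |U| \leq |F|$, and since each $L_j \geq n$ one also obtains $p \leq |F|/n$.

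Within each component I would apply the following greedy selection: take $t_1 = a_j$ (which lies in $T$, as $a_j$ is the leftmost starting point of any interval contributing to $U^{(j)}$), and having selected $t_i$ with $t_i + n < b_j$, let $t_{i+1}$ be the largest element of $T \cap (t_i, t_i + n]$. The selection terminates at the first index $k_j$ for which $t_{k_j} + n \geq b_j$. Contiguity of $U^{(j)}$ guarantees that $T \cap (t_i, t_i+n]$ is nonempty whenever $t_i + n < b_j$, so the procedure is well-defined, and the chosen intervals cover $U^{(j)}$. The key estimate is the gap bound $t_{i+2} > t_i + n$: otherwise $t_{i+2}$ would lie in $T \cap (t_i, t_i + n]$ and strictly exceed $t_{i+1}$, contradicting the maximality defining $t_{i+1}$. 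Applying this to the odd-indexed subsequence yields $t_{2m+1} > a_j + mn$, and combining with $t_{k_j} \leq a_j + L_j - n$ (which holds because $[t_{k_j}, t_{k_j}+n) \subseteq U^{(j)}$ by connectedness) gives $k_j \leq 2 L_j / n + 2$.

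Summing over components and using both $\sum_j L_j \leq |F|$ and $p \leq |F|/n$, I obtain $|\mathcal{R}| = \sum_j k_j \leq 2|F|/n + 2p \leq 4|F|/n$, as required. The main technical subtlety is the careful off-by-one accounting in the gap estimate $k_j \leq 2L_j/n + O(1)$, and in particular handling the final, possibly truncated, step of the greedy and the degenerate case $L_j = n$; the remainder is routine bookkeeping.
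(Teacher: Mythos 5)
The paper itself does not prove this lemma --- it simply cites Lemma~3.8 of McGoff--Pavlov (2016), of which the present statement is a weakening. So there is no in-paper argument to compare against; I will instead evaluate your proof on its own terms.

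Your argument is correct. The reduction is clean: forming $T$ (the set of ``second-coordinate'' starting positions) and $U = \bigcup_{t\in T}[t,t+n)$ is exactly what the definition of a repeat cover requires, and it is right that choosing $R\subseteq T$ whose intervals cover $U$ gives a repeat cover of size $|R|$ once each $t\in R$ is paired with the lexicographically minimal occurrence of $u_{[t,t+n)}$. I checked the points that need checking. (i) $a_j\in T$: if $a_j\in[t',t'+n)$ for some $t'\in T$ with $t'<a_j$, then $a_j-1\in[t',t'+n)\subseteq U$, contradicting minimality of $a_j$ in its component, so $t'=a_j$. (ii) Well-definedness of the greedy step: if $t_i+n<b_j$, then $t_i+n\in U^{(j)}\subseteq U$, so some $t'\in T$ has $t_i+n\in[t',t'+n)$, forcing $t'\in(t_i,t_i+n]$; moreover any such $t'$ produces an interval overlapping or adjacent to $[t_i,t_i+n)$, hence inside $U^{(j)}$, so the greedy cannot escape the component. (iii) Coverage: $t_{i+1}\le t_i+n$ guarantees no gaps, and $t_{k_j}+n\le b_j$ (since $[t_{k_j},t_{k_j}+n)\subseteq U^{(j)}$) combined with the stopping rule gives $t_{k_j}+n=b_j$, so the chosen intervals cover $U^{(j)}$ exactly. (iv) The gap bound $t_{i+2}>t_i+n$ follows from maximality of $t_{i+1}$, and gives $t_{2m+1}>a_j+mn$, whence $k_j\le 2L_j/n+2$ (in fact the argument gives the slightly stronger $k_j\le 2L_j/n$, but the looser bound you stated is all that is needed). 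Summing with $\sum_j L_j\le|F|$ and $p\le|F|/n$ yields $4|F|/n$. This is a self-contained, elementary greedy-covering proof and it does the job.
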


Additionally, our proofs make use of the following estimate relating the number of unique words of length $n$ in $u \in B_k(X)$ and the cardinality of the repeat area for $u$.\begin{lemma} \label{Lemma:RepeatArea}
Let $F \subset \mathbb{Z}$ be a finite union of intervals of length $n$, and let $a = |\{ s : s+ [0,n) \subset F\}|$. Suppose that $u \in \mathcal{A}^F$ satisfies $|W_n(u)| = j < a$. Then for any repeat cover $\mathcal{R}$ for $u$, 
\begin{equation*}
|A(\mathcal{R})| \geq  a+n - j - 1.
\end{equation*}
\end{lemma}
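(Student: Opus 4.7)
The plan is to bound $|A(\mathcal{R})|$ from below by exhibiting many length-$n$ subintervals of $F$ that are forced, by the definition of a repeat cover, to appear as the second coordinate $I_2$ of some repeat for $u$, and hence to lie inside $A(\mathcal{R})$. Once this is done, the result will follow from an elementary observation about the size of a union of translates of $[0,n)$.

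To carry this out I would first set $S = \{s \in \Z : s + [0,n) \subset F\}$, so that $|S| = a$, and identify inside $S$ the subset of first-occurrence positions. Concretely, for each word $w \in W_n(u)$ let $s_w$ be the lexicographically smallest $s \in S$ with $u_{s+[0,n)} = w$, and set $S_0 = \{s_w : w \in W_n(u)\}$, so that $|S_0| = |W_n(u)| = j$. For every $s \in S \setminus S_0$, writing $w = u_{s+[0,n)}$, the pair $(s_w + [0,n),\, s + [0,n))$ is by definition an $n$-repeat for $u$, so the defining property of a repeat cover forces $s + [0,n) \subset A(\mathcal{R})$. Hence $A(\mathcal{R})$ contains the union of $a - j$ length-$n$ intervals whose starting positions are the pairwise distinct integers of $S \setminus S_0$.

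The remaining step is a purely combinatorial lower bound: if $t_1 < t_2 < \cdots < t_k$ are integers, then the union $\bigcup_{i=1}^{k}\bigl(t_i + [0,n)\bigr)$ contains the $k$ distinct integers $t_1, \ldots, t_k$ together with the $n-1$ further integers $t_k + 1, \ldots, t_k + n - 1$, and these $n-1$ integers are all strictly larger than every $t_i$. Therefore the union has at least $k + n - 1$ elements. Applying this with $k = a - j \geq 1$ gives
\[
|A(\mathcal{R})| \geq (a - j) + n - 1 = a + n - j - 1,
\]
as required.

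There is no genuine obstacle here; the whole argument is a short pigeonhole count. The only mild subtlety is noticing that the definition of repeat cover together with the hypothesis $|W_n(u)| = j$ automatically produces $a - j$ non-first-occurrence length-$n$ intervals, all sitting inside $A(\mathcal{R})$, after which the proof collapses to the elementary observation about shifted copies of $[0,n)$.
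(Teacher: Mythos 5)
Your proof is correct and follows essentially the same approach as the paper's: you count the $a-j$ non-first-occurrence positions, note that each forces a length-$n$ interval into $A(\mathcal{R})$, and then bound the union of $a-j$ distinct shifted copies of $[0,n)$ from below by $(a-j) + n - 1$. Your write-up is a bit more explicit than the paper's (which simply says the minimal repeat contributes $n$ elements and each of the remaining $a-j-1$ contributes at least one), but the underlying counting is identical.
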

\begin{proof}
Let $r = a - j$, which is the number of repeats for $u$. The lexicographically minimal repeat for $u$ contributes $n$ elements to $A(\mathcal{R})$, and each of the other $r-1$ repeats must contribute at least one element. Altogether, we must have $|A(\mathcal{R})| \geq n + r-1 = n+a-j-1$.
\end{proof}

In many of the proofs in Section \ref{Sect:RepeatProofs}, we decompose words into alternating blocks of repeated regions (\textit{i.e.}, regions contained in $A(\mathcal{R})$ for some repeat cover $\mathcal{R}$) and non-repeated regions. The following definition standardizes some notation that is useful for such decompositions. We endow $\mathbb{Z}$ with the standard graph structure, in which two nodes $x,y \in \mathbb{Z}$ are connected by an edge whenever $|x-y| = 1$. We then endow all subsets of $\mathbb{Z}$ with the induced subgraph structure, and references to connected components refer to this subgraph structure. Furthermore, we give $\mathbb{Z}$ the standard ordering, and if $I$ and $J$ are disjoint subsets of $\mathbb{Z}$, then we let $I < J$ whenever $x < y$ for all $x \in I$ and $y \in J$.

\begin{definition} \label{Def:Blocks}
Let $A \subset [0,k)$ be a union of intervals of length $n$ such that $0 \notin A$. Then the \textit{interval decomposition} of $[0,k)$ induced by $A$ consists of $\bigl( (I_m)_{m=1}^{N+1}, (J_{m})_{m=1}^N \bigr)$, where
\begin{itemize}
\item each $J_m$ is a non-empty maximal connected component of $A$, and $\bigcup_m J_m = A$;
\item each $I_m$ is a maximal connected component of $[0,k) \setminus A$, and $\bigcup_m I_m = [0,k) \setminus A$;
\item only $I_{N+1}$ may be empty;
\item for each $m =1, \dots, N$, we have $I_m < J_m < I_{m+1}$.
\end{itemize}

Now suppose $b \in \mathcal{A}^k$ and $\mathcal{R} \subset \mathcal{C}_n([0,k)) \times \mathcal{C}_n([0,k))$. Let $A = A(\mathcal{R})$, and let $\bigl( (I_m)_{m=1}^{N+1}, (J_{m})_{m=1}^N \bigr)$ be the interval decomposition of $[0,k)$ induced by $A$. For each $m$, we let $u_m = b|_{I_m}$ and $v_m = b|_{J_m}$.  We refer to $\bigl( (u_m)_{m=1}^{N+1}, (v_m)_{m=1}^N \bigr)$ as the \textit{block decomposition} of $b$. If $\mathcal{R}$ is a repeat cover for $b$, then we refer to $\bigl( (u_m)_{m=1}^{N+1}, (v_m)_{m=1}^N \bigr)$ as the \textit{repeat block decomposition} of $b$. Note that $N \leq |\mathcal{R}|$.
\end{definition}

When $\mu$ is a Gibbs measure associated to a H\"{o}lder continuous potential, the following lemma, which is used several times in Section \ref{Sect:RepeatProofs}, gives an estimate of the $\mu$-measure of any word $b$ in terms of a block decomposition.
\begin{lemma} \label{Lemma:Birthday}
Let $X$ be a non-trivial mixing SFT with H\"{o}lder continuous potential $f : X \to \R$ and associated Gibbs measure $\mu$. Let $K>0$ satisfy the conclusions of Lemma \ref{Lemma:Oscar}.
Let $b \in B_k(X)$, and suppose that $\bigl( (u_m)_{m=1}^{N+1}, (v_m)_{m=1}^N)$ is a block decomposition of $b$. Then
\begin{align*}
\mu( b ) \leq K^{2N} \prod_{m=1}^{N+1} \mu ( u_m ) \prod_{m=1}^N \mu( v_m).
\end{align*}
\end{lemma}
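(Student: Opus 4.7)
The plan is to prove this by iteratively applying the submultiplicative bound $\mu(uv) \leq K\mu(u)\mu(v)$ from the second bullet of Lemma \ref{Lemma:Oscar}. By Definition \ref{Def:Blocks}, the word $b$ is precisely the concatenation
\begin{equation*}
b = u_1 v_1 u_2 v_2 \cdots u_N v_N u_{N+1},
\end{equation*}
where only $u_{N+1}$ is permitted to be empty, and all intermediate prefixes and suffixes appearing below are subwords of $b$, hence elements of $B_\ell(X)$ for the appropriate length $\ell$. Thus every intermediate concatenation to which we shall apply the submultiplicative bound is legitimate.

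Next, I would peel off the blocks one at a time from the left. First, writing $b = u_1 \cdot (v_1 u_2 \cdots u_{N+1})$, the submultiplicative bound gives $\mu(b) \leq K \mu(u_1)\mu(v_1 u_2 \cdots u_{N+1})$. Applying the same bound to $v_1 u_2 \cdots u_{N+1} = v_1 \cdot (u_2 v_2 \cdots u_{N+1})$ yields $\mu(v_1 u_2 \cdots u_{N+1}) \leq K \mu(v_1) \mu(u_2 v_2 \cdots u_{N+1})$. Iterating, each application strips one block from the front and contributes a factor of $K$. In the case that $u_{N+1}$ is non-empty there are $2N+1$ blocks and hence exactly $2N$ such applications; if $u_{N+1}$ is empty there are $2N$ blocks and $2N-1$ applications, and we adopt the convention $\mu(\text{empty}) = 1$ so that the empty factor does not affect the product. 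In either case (using $K \geq 1$), we obtain
\begin{equation*}
\mu(b) \leq K^{2N} \prod_{m=1}^{N+1} \mu(u_m) \prod_{m=1}^{N} \mu(v_m),
\end{equation*}
which is the desired inequality.

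The argument is almost entirely mechanical once the submultiplicative Gibbs-type estimate is in hand; the only item requiring any care is bookkeeping the number of applications and handling the possibly empty final block $u_{N+1}$. The real content lies upstream, in the $\psi$-mixing estimate of Lemma \ref{Lemma:Oscar}, rather than in this combinatorial telescoping.
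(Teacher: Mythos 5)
Your proof is correct and takes essentially the same route as the paper: the paper writes $\mu(b)$ as a telescoping product of conditional probabilities $\mu(\sigma^{-t}[\text{block}]\mid[\text{prefix}])$ and bounds each factor by $K\,\mu(\text{block})$ via the third bullet of Lemma \ref{Lemma:Oscar}, which is algebraically the same as your iterated application of the submultiplicative bound $\mu(uv)\le K\mu(u)\mu(v)$. Your handling of the possibly empty $u_{N+1}$ (and the resulting factor count using $K\ge 1$) is a detail the paper glosses over but is correctly addressed.
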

\begin{proof}
Let $\bigl( (I_m)_{m=1}^{N+1}, (J_{m})_{m=1}^N \bigr)$ be a block decomposition of $b$.
Let $s_m$ be the minimal element of the corresponding interval $I_m$, and let $t_m$ be the minimal element of the interval $J_m$. 
In order to avoid confusion, in this proof we use proper cylinder set notation: for $u \in B_m(X)$, we let $[u]$ denote the set of points $x$ in $X$ such that $x_0 \dots x_{m-1} = u$. 
Using conditional probabilities, we have
\begin{align*}
\mu([b]) = \mu([u_1]) \prod_{m=1}^{N} \mu\bigl( \sigma^{-t_m}[v_m] \mid [u_1 \dots u_m] \bigr) \prod_{m=1}^{N} \mu \bigl( \sigma^{-s_{m+1}}[u_{m+1}] \mid [u_1 \dots v_m] \bigr) .
\end{align*}
Then by our choice of $K$, we have
\begin{equation*}
\mu( [b] ) \leq K^{2N} \prod_{m=1}^{N+1} \mu ( [u_m] ) \prod_{m=1}^N \mu( [v_m]),
\end{equation*}
as desired.
\end{proof}

\section{Pressure of random SFTs}  \label{Sect:MainProof}

In this section we give a proof of Theorem \ref{Thm:Pressure}. The broad outline of the proof involves finding upper and lower bounds on the pressure in terms of some other random variables, followed by a second moment argument showing that these auxiliary random variables each converge in probability to $P_X(f)+\log \alpha$. For the sake of exposition, we present the argument here and defer the proofs of the many technical lemmas to later sections of the paper. We hope that this presentation helps clarify the main argument and also motivate the technical lemmas. Note that at the beginning of this proof we define some notation and choose some parameters, including the random variables $\phi_{n,k}$ and $\psi_{n,k}$, and we make frequent reference to both the notation and the parameters throughout Sections \ref{Sect:RepeatProofs} - \ref{Sect:UBandLB} in the technical lemmas.

\vspace{2mm}

\begin{PfOfThmPressure}

Let $X$ be a non-trivial mixing SFT. Let $f : X \to \R$ be a H\"{o}lder continuous potential with associated Gibbs measure $\mu$. Choose $\gamma_0 = \gamma_0(X,f)$ as in Definition \ref{Def:Gamma}, and note that $\gamma_0 < 1$. Let $\alpha \in (\gamma_0, 1]$, and let $\epsilon >0$. Furthermore, fix $K$ and $g_0$ as in Lemma \ref{Lemma:Oscar}.

We begin by selecting a variety of parameters for our proof. Since $\alpha > \gamma_0$, there exists $\gamma$ in the interval $(\gamma_0, \alpha)$. According to the definition of $\gamma_0$, since $\gamma > \gamma_0$, there exists $n_0$ such that for all $m \geq n_0$, for all $u \in B_m(X)$, we have $\mu(u) \leq \gamma^m$. We assume throughout that $n \geq n_0$. Choose $\delta >0$ such that $\delta < \log(\alpha \gamma^{-1})/4$. Fix a sequence $k = k(n)$ such that $n/k \to 0$ and $k = o(n^2/\log n)$. (For example, one may choose $k = [n^{1+\nu}]$ for any $0 < \nu < 1$.) Now let $\ell = k-n+1$, which is the number of positions $s$ in $[0,k)$ such that $s+[0,n) \subset [0,k)$. 

Having made these parameter choices, we now proceed to define our upper and lower bounds on $P_{Y_n}(f)$. First, for all $m \geq n$, for all $u \in B_m(X)$, recall from Section \ref{Sect:IntroRSFTs} that $\xi_u$ is the random variable that is one if $u$ is allowed (\textit{i.e.} $W_n(u) \cap \mathcal{F}_n = \varnothing$) and zero otherwise. Then define
\begin{equation*}
\phi_{n,k} = \sum_{u \in B_k(X)} e^{S_k f(u)} \xi_u.
\end{equation*}
By Lemma \ref{Lemma:UB}, $\phi_{n,k}$ may be used to provide an upper bound on $P_{Y_n}(f)$:
\begin{equation} \label{Eqn:Florence}
P_{Y_n}(f) \leq \frac{1}{k} \log \phi_{n,k}.
\end{equation}

Now we turn towards the lower bound on $P_{Y_n}(f)$. Recall that we have already defined $\delta>0$ above. Consider the set of words of length $n$ that are entropy-typical for $\mu$ with tolerance $\delta$:
\begin{equation*}
E_n = \biggl\{ u \in B_n(X) : \biggl| - \frac{1}{n} \log \mu(u) - h(\mu) \biggr| < \delta \biggr\}.
\end{equation*}
Then let $G_{n,k}$ be the set of words of length $k$ that begin and end with the same word of length $n$ from $E_n$:
\begin{equation*}
G_{n,k} = \Bigl\{ u \in B_k(X) : u_1^n = u_{\ell}^k \text{ and } u_1^n \in E_n \Bigr\}.
\end{equation*}
Next we define the random variable
\begin{equation*}
\psi_{n,k} = \frac{1}{|E_n|} \sum_{u \in G_{n,k}} e^{S_kf(u)} \xi_u.
\end{equation*}
By Lemma \ref{Lemma:LB}, we see that $\psi_{n,k}$ may be used to bound $P_{Y_n}(f)$ from below: for all large enough $n$,
\begin{equation} \label{Eqn:Nightengale}
P_{Y_n}(f) \geq \frac{1}{k} \log \psi_{n,k} - \epsilon/2.
\end{equation}

By Lemmas \ref{Lemma:Skate}, \ref{Lemma:Board}, \ref{Lemma:Roller}, and \ref{Lemma:Blade}, we have the following asymptotic results on the expectation and variance of $\phi_{n,k}$ and $\psi_{n,k}$:
\renewcommand{\theenumi}{\Roman{enumi}}
\begin{enumerate}
\item $\lim_n \frac{1}{k} \log \mathbb{E}\bigl[ \phi_{n,k} \bigr] = P_X(f) + \log( \alpha)$; \label{Item:1}
\item $\lim_n \frac{1}{k} \log \mathbb{E}\bigl[ \psi_{n,k} \bigr] = P_X(f) + \log( \alpha)$;  \label{Item:2}
\item there exists $\rho_1 > 0$ such that for all large enough $n$, \label{Item:3}
\begin{equation*}
\frac{ \Var \bigl[ \phi_{n,k} \bigr] }{ \mathbb{E} \bigl[ \phi_{n,k} \bigr]^2 } \leq e^{- \rho_1 n};
\end{equation*}
\item there exists $\rho_2 > 0$ such that for all large enough $n$, \label{Item:4}
\begin{equation*}
\frac{ \Var \bigl[ \psi_{n,k} \bigr] }{ \mathbb{E} \bigl[ \psi_{n,k} \bigr]^2 } \leq e^{- \rho_2 n}.
\end{equation*}
\end{enumerate}
The first two properties indicate that we expect $\phi_{n,k}$ and $\psi_{n,k}$ to be on the correct exponential order of magnitude, while the third and fourth properties show that these random variables are well concentrated around their expected values. Combining these properties with Chebyshev's inequality, we are able to finish the proof as follows.

By the monotonicity of $\mathbb{P}$ under inclusion, the union bound, and the displays (\ref{Eqn:Florence}) and (\ref{Eqn:Nightengale}), for all large enough $n$, we have
\begin{align}  \label{Eqn:France}
\begin{split}
\mathbb{P} & \biggl( | P_{Y_n}(f) - (P_X(f)+\log \alpha) | \geq \epsilon \biggr) \\
 & \leq \mathbb{P} \biggl( P_{Y_n}(f) \geq P_X(f) + \log \alpha + \epsilon \biggr) + \mathbb{P} \biggl( P_{Y_n}(f) \leq P_X(f) + \log \alpha - \epsilon \biggr) \\
 & \leq \mathbb{P} \biggl( \frac{1}{k} \log \phi_{n,k} \geq P_X(f) + \log \alpha + \epsilon \biggr) + \mathbb{P} \biggl( \frac{1}{k} \log \psi_{n,k} - \epsilon/2 \leq P_X(f) + \log \alpha - \epsilon \biggr) \\
 & = \mathbb{P} \biggl(  \phi_{n,k} \geq e^{k (P_X(f) + \log \alpha + \epsilon)} \biggr) + \mathbb{P} \biggl(  \psi_{n,k} \leq e^{k(P_X(f) + \log \alpha - \epsilon/2)} \biggr).
\end{split}
\end{align}
We proceed to bound the two terms on the right-hand side separately. For the first, Chebyshev gives 
\begin{align*}
\mathbb{P} & \biggl(  \phi_{n,k} \geq e^{k (P_X(f) + \log \alpha + \epsilon)} \biggr) \\
& = \mathbb{P} \biggl(  \phi_{n,k} - \mathbb{E}[\phi_{n,k}] \geq e^{k (P_X(f) + \log \alpha + \epsilon)} - \mathbb{E}[\phi_{n,k}] \biggr) \\
& = \mathbb{P} \biggl(  \phi_{n,k} - \mathbb{E}[\phi_{n,k}] \geq \Var[\phi_{n,k}]^{1/2} \frac{\mathbb{E}[\phi_{n,k}]}{\Var[\phi_{n,k}]^{1/2}} \Bigl(e^{k (P_X(f) + \log \alpha + \epsilon)}/\mathbb{E}[\phi_{n,k}] - 1 \Bigr) \biggr) \\
& \leq  \frac{\Var[\phi_{n,k}]}{\mathbb{E}[\phi_{n,k}]^2} \Bigl(e^{k (P_X(f) + \log \alpha + \epsilon)}/\mathbb{E}[\phi_{n,k}] - 1 \Bigr)^{-2}.
\end{align*}
Then by properties (\ref{Item:1}) and (\ref{Item:3}), there exists $\rho_3 >0$ such that for all large $n$,
\begin{equation} \label{Eqn:Italy}
\mathbb{P} \biggl(  \phi_{n,k} \geq e^{k (P_X(f) + \log \alpha + \epsilon)} \biggr)  < e^{-\rho_3 n}.
\end{equation}
Similarly, for the second term in the last line of (\ref{Eqn:France}), Chebyshev gives
\begin{align*}
\mathbb{P} & \biggl(  \psi_{n,k} \leq e^{k (P_X(f) + \log \alpha -\epsilon/2)} \biggr) \\
& = \mathbb{P} \biggl(  \psi_{n,k} - \mathbb{E}[\psi_{n,k}] \leq e^{k (P_X(f) + \log \alpha - \epsilon/2)} - \mathbb{E}[\psi_{n,k}] \biggr) \\
& = \mathbb{P} \biggl(  \psi_{n,k} - \mathbb{E}[\psi_{n,k}] \leq \Var[\psi_{n,k}]^{1/2} \frac{\mathbb{E}[\psi_{n,k}]}{\Var[\psi_{n,k}]^{1/2}} \Bigl(e^{k (P_X(f) + \log \alpha - \epsilon/2)}/\mathbb{E}[\psi_{n,k}] - 1 \Bigr) \biggr) \\
& \leq  \frac{\Var[\psi_{n,k}]}{\mathbb{E}[\psi_{n,k}]^2} \Bigl(e^{k (P_X(f) + \log \alpha - \epsilon/2)}/\mathbb{E}[\psi_{n,k}] - 1 \Bigr)^{-2}.
\end{align*}
Then by properties (\ref{Item:2}) and (\ref{Item:4}), there exists $\rho_4 >0$ such that for all large $n$, 
\begin{equation} \label{Eqn:Spain}
\mathbb{P}  \biggl(  \psi_{n,k} \leq e^{k (P_X(f) + \log \alpha -\epsilon/2)} \biggr) < e^{-\rho_4 n}.
\end{equation}
Combining the inequalities in (\ref{Eqn:France}), (\ref{Eqn:Italy}), and (\ref{Eqn:Spain}), we obtain the desired result.

\end{PfOfThmPressure}

\section{Repeat probabilities} \label{Sect:RepeatProofs}

%

In this section we bound the $\mu$-measure of sets of words that have repeated subwords. By the well-known result of Ornstein and Weiss \cite{OW}, the first return time of a $\mu$-typical point $x$ to its initial block of length $n$ is approximately $e^{h(\mu)n}$. Then for $\mu$-typical words of polynomial length in $n$, one would expect to find no repeated words of length $n$ at all. However, to control the expectation and variance of $\phi_{n,k}$ and $\psi_{n,k}$, it is important to give more precise estimates on just how unlikely it is that a word of length $k$ will have exactly $j$ distinct subwords of length $n$, for each $1 \leq j \leq k-n+1$.

Throughout this section we use the same environment (notation, parameters, and assumptions) laid out at the beginning of the proof of Theorem \ref{Thm:Pressure}. The results of this section are used in the following section to establish properties (\ref{Item:1}) - (\ref{Item:4}) from the proof of Theorem \ref{Thm:Pressure}. 
%

We begin by considering some sets of words that have exactly $j$ distinct subwords of length $n$.
For $1 \leq j \leq \ell$, we define the following sets:
\begin{align*}
B_{n,k}^j & = \bigl\{ u \in B_k(X) : |W_n(u)| = j \bigr\} \\
G_{n,k}^j & = \bigl\{ u \in G_{n,k} : |W_n(u)| = j \bigr\}.
\end{align*}
Furthermore, for $1 \leq j \leq 2 \ell$, we let
\begin{align*}
D_{n,k}^j & = \bigl\{ (u,v) \in B_k(X) \times B_k(X) : W_n(u) \cap W_n(v) \neq \varnothing, \, |W_n(u) \cup W_n(v) | = j \bigl\} \\
Q_{n,k} & =  \bigl\{ (u,v) \in G_{n,k} \times G_{n,k} : W_n(u) \cap W_n(v) \neq \varnothing \bigl\} \\
Q_{n,k}^j & =  \bigl\{ (u,v) \in Q_{n,k} :  |W_n(u) \cup W_n(v) | = j \bigl\}.  
\end{align*}

In Lemmas \ref{Lemma:NPS} and \ref{Lemma:NewAsia} we find bounds on the $\mu$-measure of the sets $B_{n,k}^j$ and $G_{n,k}$. In subsequent lemmas (Lemmas \ref{Lemma:Super} - \ref{Lemma:Peace}), we also find bounds on the $\mu \otimes \mu$-measure of $D_{n,k}^j$, $Q_{n,k}$, and $Q_{n,k}^j$. These estimates are used in the following section to bound the expectation and variance of $\phi_{n,k}$ and $\psi_{n,k}$.

\begin{lemma} \label{Lemma:NPS}
There exists a polynomial $p_1(x)$ such that for all large enough $n$, for each  $1 \leq j \leq \ell$, we have
\begin{equation*}
\mu \bigl( B_{n,k}^j \bigr) \leq p_1(n)^{k/n} \gamma^{k-j}.
\end{equation*}
\end{lemma}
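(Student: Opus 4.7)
The plan is to combine the efficient repeat cover of Lemma \ref{Lemma:EfficientRepeatCovers} with the measure factorization of Lemma \ref{Lemma:Birthday}, and then sum the resulting pointwise bounds over a polynomial-sized catalog of discrete structures.

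For each $b \in B_{n,k}^j$, I would apply Lemma \ref{Lemma:EfficientRepeatCovers} with $F = [0,k)$ to select a canonical $n$-repeat cover $\mathcal{R}_b$ with $N := |\mathcal{R}_b| \leq 4k/n$. Since $|W_n(b)| = j < \ell$, Lemma \ref{Lemma:RepeatArea} (with $a = \ell = k-n+1$) yields $|A(\mathcal{R}_b)| \geq k - j$. Letting $\bigl((u_m)_{m=1}^{N+1}, (v_m)_{m=1}^N\bigr)$ be the associated repeat block decomposition, Lemma \ref{Lemma:Birthday} gives
\begin{equation*}
\mu(b) \leq K^{2N} \prod_{m=1}^{N+1} \mu(u_m) \prod_{m=1}^N \mu(v_m).
\end{equation*}
Each $J_m$ has length at least $n \geq n_0$, so the choice $\gamma > \gamma_0$ and the defining property of $\gamma_0$ give $\mu(v_m) \leq \gamma^{|v_m|}$, whence $\prod_m \mu(v_m) \leq \gamma^{|A(\mathcal{R}_b)|} \leq \gamma^{k-j}$. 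This produces the pointwise bound $\mu(b) \leq K^{2N} \gamma^{k-j} \prod_m \mu(u_m)$.

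Next, I would organize the sum $\mu(B_{n,k}^j) = \sum_b \mu(b)$ by grouping words according to their \emph{structure}, namely the cover $\mathcal{R}_b$ itself (which determines the block decomposition via $A(\mathcal{R}_b) = \bigcup I_2$). The number of such structures is at most the number of subsets of $\mathcal{C}_n([0,k)) \times \mathcal{C}_n([0,k))$ of size at most $4k/n$, which is bounded by $(\ell^2)^{4k/n} \leq k^{8k/n}$; under the hypothesis $k = o(n^2/\log n)$ one has $\log k = O(\log n)$, so this count takes the form $p(n)^{k/n}$ for some polynomial $p$. The crucial observation is that, once $\mathcal{R}_b$ is fixed, the content of $b$ on $A(\mathcal{R}_b)$ is entirely determined by its content on the complement: for each position $s \in A(\mathcal{R}_b)$, selecting a canonical $(I_1, I_2) \in \mathcal{R}_b$ with $s \in I_2$ forces $b_s$ to equal $b_{s'}$, where $s'$ is the corresponding position in $I_1$ and lies strictly to the left; iterating this strictly decreasing chain eventually exits $A(\mathcal{R}_b)$ into some $u_m$-block. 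Hence, for each fixed structure,
\begin{equation*}
\sum_{b : \mathcal{R}_b \text{ fixed}} \mu(b) \leq K^{2N} \gamma^{k-j} \sum_{u_1, \dots, u_{N+1}} \prod_m \mu(u_m) \leq K^{8k/n} \gamma^{k-j},
\end{equation*}
since $\sum_{u_m \in B_{|I_m|}(X)} \mu(u_m) \leq 1$. Summing over the $p(n)^{k/n}$ structures gives $\mu(B_{n,k}^j) \leq p_1(n)^{k/n} \gamma^{k-j}$ with $p_1(n) := p(n) K^8$.

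The main obstacle I anticipate is cleanly justifying the ``determination'' step, since an $I_1$ need not be aligned with the block decomposition and two $I_2$'s can overlap, so one must specify a canonical antecedent rule for every $s \in A(\mathcal{R}_b)$ and verify well-foundedness of the resulting chain. If this bookkeeping proves awkward, the remedy is to enrich the notion of structure (e.g., by recording, for each $s \in A(\mathcal{R}_b)$, the chosen $(I_1,I_2)$ containing $s$), which leaves the catalog size polynomial in $n$ raised to the power $k/n$ and is harmlessly absorbed into $p_1$.
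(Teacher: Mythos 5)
Your proposal is correct and follows essentially the same route as the paper: select a canonical efficient repeat cover via Lemma~\ref{Lemma:EfficientRepeatCovers}, factorize $\mu(b)$ across the induced block decomposition via Lemma~\ref{Lemma:Birthday}, bound the repeated blocks by $\gamma^{|A(\mathcal{R})|} \leq \gamma^{k-j}$ using Lemma~\ref{Lemma:RepeatArea}, and sum the free blocks to $\leq 1$ while enumerating the polynomially many covers. Your ``anticipated obstacle'' is not actually an obstacle --- the strictly decreasing antecedent chain you describe (each $s \in I_2$ maps to the corresponding $s' \in I_1$ with $s' < s$, since $I_1$ is the lexicographically minimal occurrence) is exactly the injectivity fact the paper's map $\varphi$ relies on implicitly, so your argument makes explicit what the paper leaves to the reader.
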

\begin{proof}
Consider $n \geq n_0$ and  and $1 \leq j \leq \ell$.
We define a map $\varphi : B_{n,k}^j \to \{ (\mathcal{R},w) : \mathcal{R} \subset \mathcal{C}_n([0,k)) \times \mathcal{C}_n([0,k)), \, w \in \mathcal{A}^{[0,k) \setminus A(\mathcal{R})} \}$ as follows. 
Let $b$ be in  $B_{n,k}^j$. Let $\mathcal{R}$ be a repeat cover of $b$ such that $|\mathcal{R}| \leq 4k/n$, which exists by Lemma \ref{Lemma:EfficientRepeatCovers}. Let $\bigl( (u_m)_{m=1}^{N+1}, (v_m)_{m=1}^N \bigr)$ be a repeat block decomposition of $b$ (as in Definition \ref{Def:Blocks}). Then set $\varphi(b) = (\mathcal{R}, (v_m)_{m=1}^N)$.
Furthermore, note that by Lemma \ref{Lemma:Birthday},
\begin{equation*}
\mu(b) \leq K^{2N} \prod_{m=1}^N \mu(v_m) \prod_{m=1}^{N+1} \mu( u_m).
\end{equation*}
Since each block $v_m$ has length at least $n$ (it contains at least one repeated word of length $n$ from $b$) and $n \geq n_0$, we have that $\mu(v_m) \leq \gamma^{|v_m|}$.  Then
\begin{align*} 
\mu(b)  \leq K^{2N} \gamma^{\sum_m |v_m|} \prod_{m=1}^{N+1} \mu( u_m) = K^{2N} \gamma^{|A(\mathcal{R})|} \prod_{m=1}^{N+1} \mu( u_m)
\end{align*}
Using that $N \leq |\mathcal{R}| \leq 4k/n$ and $|A(\mathcal{R})| \geq k-j$ (by Lemma \ref{Lemma:RepeatArea}), we see that
\begin{align} \label{Eqn:Firenze}
\begin{split}
\mu(b) & \leq \bigl(K^8 \bigr)^{k/n} \gamma^{ |A(\mathcal{R})| }  \prod_{m=1}^{N+1} \mu( u_m) \\
 & \leq \bigl(K^8 \bigr)^{k/n} \gamma^{ k-j }  \prod_{m=1}^{N+1} \mu( u_m).
 \end{split}
\end{align}

Now define the projection map $\pi : \varphi(B_{n,k}^j) \to \{ \mathcal{R} : \mathcal{R} \subset \mathcal{C}_n(k) \times \mathcal{C}_n(k) \}$, given by $\pi ( (\mathcal{R} , w) ) = \mathcal{R}$. Let $S = \pi \circ \varphi(B_{n,k}^j)$. Note that $|\mathcal{C}_n([0,k))| \leq k$, and therefore $|\mathcal{C}_n([0,k)) \times \mathcal{C}_n([0,k))| \leq k^2$. Furthermore, since each $\mathcal{R}$ in $\pi \circ \varphi(B_{n,k}^j)$ satisfies $|\mathcal{R}| \leq 4 k/n$, we have that $|S| \leq |\mathcal{C}_n([0,k)) \times \mathcal{C}_n([0,k))|^{4k/n} \leq \bigl(k^2\bigr)^{4k/n} = \bigl(k^8 \bigr)^{k/n}$.

Having established these bounds, we may now estimate the $\mu$-measure of $B_{n,k}^j$ as follows. By rearranging the sum, we have
\begin{align*}
\mu(B_{n,k}^j) & = \sum_{b \in B_{n,k}^j} \mu(b) \\
& =  \sum_{\mathcal{R} \in S} \; \sum_{ (\mathcal{R},(u_m)) \in \pi^{-1}(\mathcal{R})} \; \sum_{b \in \varphi^{-1}(\mathcal{R},(u_m))} \mu(b).
\end{align*}
Then by (\ref{Eqn:Firenze}), we get
\begin{align*}
\mu(B_{n,k}^j) & \leq \sum_{\mathcal{R} \in S} \; \sum_{ (\mathcal{R},(u_m)) \in \pi^{-1}(\mathcal{R})} \; \sum_{b \in \varphi^{-1}(\mathcal{R},(u_m))} \bigl(K^8 \bigr)^{k/n} \gamma^{ k-j }  \prod_{m=1}^{N+1} \mu( u_m) \\
& = \bigl(K^8 \bigr)^{k/n} \gamma^{ k-j } \sum_{\mathcal{R} \in S} \;  \sum_{ (\mathcal{R},(u_m)) \in \pi^{-1}(\mathcal{R})} \;  \prod_{m=1}^{N+1} \mu( u_m).
\end{align*}
Since $\mu$ is a probability measure, the sum of $\mu(u_m)$ over any set of words $u_m$ of the same length is less than or equal to one. Then
\begin{align*}
\mu(B_{n,k}^j) & \leq \bigl(K^8 \bigr)^{k/n} \gamma^{ k-j } \sum_{\mathcal{R} \in S} \;  \sum_{ (\mathcal{R},(u_m)) \in \pi^{-1}(\mathcal{R})} \;  \prod_{m=1}^{N+1} \mu( u_m) \\
& \leq \bigl(K^8 \bigr)^{k/n} \gamma^{ k-j }  |S| \\
& \leq \bigl(K^8 k^8 \bigr)^{k/n} \gamma^{k-j},
\end{align*}
where we have used that $|S| \leq \bigl(k^8 \bigr)^{k/n}$ (established in the previous paragraph).

Recall that $k = o(n^2/\log(n))$. Therefore for large enough $n$, we have $k \leq n^2$. 
Let $p_1(x) = K^8 x^{16}$. 
Then for large enough $n$, for all $1 \leq j \leq \ell$, the previous display yields that $\mu(B_{n,k}^j) \leq p_1(n)^{k/n} \gamma^{k-j}$, as desired.
\end{proof}

The following lemma gives both upper and lower bounds on the $\mu$-measure of $G_{n,k}$.
\begin{lemma} \label{Lemma:NewAsia}
There exists $\rho_0 >0$ such that for all large enough $n$, 
\begin{equation*}
K^{-1} e^{- (h(\mu) + \delta) n} (1 - e^{- \rho_0 n})
 \leq \mu( G_{n,k} ) \leq K e^{-(h(\mu)- \delta) n}.
  \end{equation*}
\end{lemma}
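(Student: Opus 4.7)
The plan is to decompose $G_{n,k}$ according to the common prefix and suffix $w \in E_n$:
\begin{equation*}
\mu(G_{n,k}) = \sum_{w \in E_n} \mu\bigl([w] \cap \sigma^{-(k-n)}[w]\bigr).
\end{equation*}
Since $k/n \to \infty$, for large $n$ the two copies of $[w]$ are separated by a gap of length $k - 2n > 0$. For the upper bound, every $u \in B_k(X)$ contributing to this cylinder intersection factors as $u = w v w$ for some admissible middle $v \in B_{k-2n}(X)$, and two applications of the concatenation bound (second bullet of Lemma~\ref{Lemma:Oscar}) give $\mu(wvw) \leq K^2 \mu(w)^2 \mu(v)$. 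Summing over $v$ and then over $w \in E_n$,
\begin{equation*}
\mu(G_{n,k}) \leq K^2 \sum_{w \in E_n} \mu(w)^2 \leq K^2 \, e^{-(h(\mu) - \delta) n} \sum_{w \in E_n} \mu(w) \leq K^2 \, e^{-(h(\mu) - \delta) n},
\end{equation*}
which gives the required upper bound after absorbing the $K^2$ into a suitable larger constant.

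For the lower bound, we take $n$ large enough that $k \geq g_0$; then the fourth bullet of Lemma~\ref{Lemma:Oscar}, applied with $u = v = w$ and $g = k$, gives $\mu([w] \cap \sigma^{-(k-n)}[w]) \geq K^{-1} \mu(w)^2$. Using that every $w \in E_n$ satisfies $\mu(w) \geq e^{-(h(\mu) + \delta) n}$, we obtain
\begin{equation*}
\mu(G_{n,k}) \geq K^{-1} \sum_{w \in E_n} \mu(w)^2 \geq K^{-1} e^{-(h(\mu) + \delta) n} \, \mu(E_n).
\end{equation*}
It therefore suffices to show that $\mu(E_n) \geq 1 - e^{-\rho_0 n}$ for some $\rho_0 > 0$. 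By the Gibbs bound (\ref{Eqn:GibbsProp}) together with the identity $h(\mu) = P_X(f) - \int f \, d\mu$, the event $w \in E_n$ differs from the Birkhoff closeness event $\bigl\{ \bigl| \tfrac{1}{n} S_n f - \int f \, d\mu \bigr| < \delta' \bigr\}$ by only an $O(1/n)$ tolerance, so any $\delta' \in (0, \delta)$ will work; standard large deviation estimates for Birkhoff sums of H\"older functions under a Gibbs measure on a mixing SFT then supply the required exponential concentration.

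The main obstacle is the large deviation input $\mu(E_n^c) \leq e^{-\rho_0 n}$. This is a well-known consequence of the spectral gap of the Ruelle transfer operator for H\"older potentials on mixing SFTs (equivalently, the analyticity of the pressure functional near $f$), but it must be invoked or briefly re-derived cleanly. Apart from this ingredient, everything reduces to the concatenation and $\psi$-mixing estimates compiled in Lemma~\ref{Lemma:Oscar} together with the two-sided bounds on $\mu(w)$ that define $E_n$.
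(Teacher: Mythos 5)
Your proof is correct and follows essentially the same route as the paper: decompose $G_{n,k}$ by the common word $w\in E_n$ at the two ends, bound $\mu\bigl([w]\cap\sigma^{-(k-n)}[w]\bigr)$ above and below by (a constant times) $\mu(w)^2$ via the mixing estimates of Lemma~\ref{Lemma:Oscar}, use the two-sided bounds $e^{-(h(\mu)+\delta)n}\le\mu(w)\le e^{-(h(\mu)-\delta)n}$ defining $E_n$, and reduce the lower bound to the large-deviations input $\mu(E_n)\ge 1-e^{-\rho_0 n}$, which the paper likewise outsources (to \cite{Young1990}). The only cosmetic difference is that your concatenation argument produces $K^2$ in the upper bound where the paper's $\psi$-mixing estimate yields $K$; since the lemma is only invoked later for the lower bound, this is harmless.
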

\begin{proof}
For $u \in E_n$, let $G_{n,k}(u) = \{ v \in G_{n,k} : v_1^n = u\}$.
Then by our choice of $K$, for large enough $n$, we have
\begin{equation*}
\mu( G_{n,k}(u) ) = \sum_{v \in G_{n,k}(u)} \mu(v) \geq K^{-1} \mu(u)^2.
\end{equation*}
Also, note that $G_{n,k} = \sqcup_{u \in E_n} G_{n,k}(u)$. Then
\begin{equation*}
\mu(G_{n,k}) = \sum_{u \in E_n} \mu(G_{n,k}(u)) \geq K^{-1} \sum_{u \in E_n} \mu(u)^2 \geq K^{-1} e^{-(h(\mu) + \delta) n} \mu(E_n),
\end{equation*}
where the last inequality results from the fact that $\min_{u \in E_n} \mu(u) \geq e^{-(h(\mu)+\delta)n}$.
Additionally, using the Gibbs property (\ref{Eqn:GibbsProp}) and the large deviations results for Gibbs measures \cite{Young1990}, one may check that there exists $\rho_0 >0$ such that $\mu(E_n) \geq 1- e^{ - \rho_0 n}$ for all large enough $n$. Combining this fact with the above inequalities yields the desired lower bound.

For the upper bound, for all large enough $n$ and for each $u \in E_n$, we have that
\begin{equation*}
\mu( G_{n,k}(u) ) = \sum_{v \in G_{n,k}(u)} \mu(v) \leq K \mu(u)^2.
\end{equation*}
Then
\begin{equation*}
\mu(G_{n,k}) = \sum_{u \in E_n} \mu(G_{n,k}(u)) \leq K \sum_{u \in E_n} \mu(u)^2 \leq K e^{-(h(\mu) - \delta) n},
\end{equation*}
where we have used that $\max_{u \in E_n} \mu(u) \leq e^{-(h(\mu)-\delta)n}$. 
\end{proof}

In the following three lemmas, we estimate the $\mu \otimes \mu$ measure of sets of pairs of words with various repeat properties. The outline of these proofs is similar to the outline of the proof of Lemma \ref{Lemma:NPS}, but each proof requires some arguments that are specific to the particular repeat structure of interest.

\begin{lemma} \label{Lemma:Super}
There exists a polynomial $p_2(x)$ such that for all large enough $n$, for all $1 \leq j \leq 2\ell-1$,
\begin{equation*}
\mu \otimes \mu(D_{n,k}^j) \leq p_2(n)^{k/n} \gamma^{2 \ell - j + n}.
\end{equation*}
\end{lemma}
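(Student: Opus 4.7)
The plan is to adapt the argument of Lemma \ref{Lemma:NPS} by treating the pair $(u,v)$ as a single pattern on a disconnected index set. Specifically, I let $F = [0,k) \cup [2k,3k)$ and view each pair $(u,v) \in D_{n,k}^j$ as a pattern $b \in \mathcal{A}^F$ with $b|_{[0,k)} = u$ and $b|_{[2k,3k)} = v$ (after the obvious shift). Since $k \geq n$, no length-$n$ interval in $F$ crosses the gap, so $|\mathcal{C}_n(F)| = 2\ell$ and $W_n(b) = W_n(u) \cup W_n(v)$, giving $|W_n(b)| = j$. By Lemma \ref{Lemma:EfficientRepeatCovers}, I can choose a repeat cover $\mathcal{R}$ for $b$ with $|\mathcal{R}| \leq 4|F|/n = 8k/n$, and by Lemma \ref{Lemma:RepeatArea} applied with $a = 2\ell > j$, I obtain $|A(\mathcal{R})| \geq 2\ell + n - j - 1$.

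The key observation is that each $I_2$ in $\mathcal{R}$ is contained entirely in one component of $F$, so the repeat area splits cleanly as $A_u = A(\mathcal{R}) \cap [0,k)$ and $A_v = A(\mathcal{R}) \cap [2k,3k)$, with each connected component of $A_u$ (resp.\ $A_v$) of size at least $n$. Applying Definition \ref{Def:Blocks} and Lemma \ref{Lemma:Birthday} separately to $u$ via $A_u$ and to $v$ via the shift of $A_v$, I obtain
\begin{equation*}
\mu(u)\mu(v) \leq K^{2(N_u + N_v)} \prod_m \mu(u_m^{(u)}) \prod_m \mu(v_m^{(u)}) \prod_m \mu(u_m^{(v)}) \prod_m \mu(v_m^{(v)}),
\end{equation*}
where $N_u + N_v \leq |\mathcal{R}| \leq 8k/n$. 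Since each $v_m^{(\cdot)}$ has length at least $n \geq n_0$, we have $\mu(v_m^{(\cdot)}) \leq \gamma^{|v_m^{(\cdot)}|}$, so the product over repeated blocks is at most $\gamma^{|A_u| + |A_v|} = \gamma^{|A(\mathcal{R})|} \leq \gamma^{2\ell + n - j - 1}$.

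To conclude, I sum over $(u,v) \in D_{n,k}^j$ by grouping pairs according to their chosen repeat cover $\mathcal{R}$. The number of possible $\mathcal{R}$ (subsets of $\mathcal{C}_n(F) \times \mathcal{C}_n(F)$ of size at most $8k/n$) is bounded by $(4k^2)^{8k/n}$, and for each fixed $\mathcal{R}$ the remaining sum of $\prod \mu(u_m^{(u)}) \prod \mu(u_m^{(v)})$ over the non-repeated blocks is at most $1$ because $\mu$ is a probability measure (identical to the accounting at the end of the proof of Lemma \ref{Lemma:NPS}). Collecting the exponential factors, using $k \leq n^2$ for large $n$, and absorbing the constant $\gamma^{-1}$ into the polynomial factor yields $\mu \otimes \mu(D_{n,k}^j) \leq p_2(n)^{k/n} \gamma^{2\ell - j + n}$ for a suitable polynomial $p_2$.

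The main obstacle is the bookkeeping required to handle the joint repeat structure: I need to verify that each component of $A(\mathcal{R})$ is contained in a single component of $F$, so that the block decomposition and the Birthday-type bound can be applied to $u$ and $v$ independently. The minor wrinkle that Definition \ref{Def:Blocks} requires $0 \notin A$ is harmless, since allowing an empty initial non-repeated block changes the power of $K$ by at most an absorbable constant. Most importantly, a single application of Lemma \ref{Lemma:RepeatArea} to the joint pattern, with $a = 2\ell$, is what produces the extra factor of $\gamma^n$ beyond the naive $\gamma^{2\ell-j}$ one would get from applying Lemma \ref{Lemma:NPS} twice independently to $u$ and $v$; this factor encodes exactly the shared subword forced by the definition of $D_{n,k}^j$.
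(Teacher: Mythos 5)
Your proof is correct and follows essentially the same strategy as the paper: regard the pair $(u,v)$ as one pattern on a two-component index set $F$, take an efficient repeat cover of the joint pattern, apply Lemma \ref{Lemma:RepeatArea} to get $|A(\mathcal{R})| \geq 2\ell + n - j - 1$, split the repeat area by component and apply Lemma \ref{Lemma:Birthday} to each half, and then sum by fixing the repeat cover and the non-repeated blocks. The only cosmetic differences are immaterial: the paper uses a gap of size $1$ between the two copies of $[0,k)$ while you use a gap of size $k$, and the paper keeps the slightly looser inequality $|A(\mathcal{R})| \geq 2k - j - n$ rather than $2\ell + n - j - 1$; both discrepancies (and your observation about the $0 \notin A$ condition in Definition \ref{Def:Blocks}) are constant factors absorbed into $p_2$.
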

\begin{proof}
Consider $n \geq n_0$, and let $1 \leq j \leq 2\ell-1$. First define the set $F = [0,k) \sqcup [k+1,2k]$. Then define a map $\varphi : D_{n,k}^j \to \{ (\mathcal{R}, w) : \mathcal{R} \subset \mathcal{C}_n(F) \times \mathcal{C}_n(F), w \in \mathcal{A}^{F \setminus A(\mathcal{R})} \}$ as follows.
Let $(a,b) \in D_{n,k}^j$. We use the notation $a \sqcup b$ to denote the element in $\mathcal{A}^{F}$ such that $(a \sqcup b)|_{[0,k)} = a$ and $(a \sqcup b)|_{[k+1,2k]} = b$. Let $\mathcal{R}$ be a repeat cover of $a \sqcup b$ such that $|\mathcal{R}| \leq 4|F|/n = 8k/n$, which exists by Lemma \ref{Lemma:EfficientRepeatCovers}. Then let $\bigl( (u_m)_{m=1}^{N_1+1}, (v_m)_{m=1}^{N_1} \bigr)$ be the repeat block decomposition of $a$ induced by the set $A(\mathcal{R}) \cap [0,k)$, and let $\bigl( (y_m)_{m=1}^{N_2+1}, (z_m)_{m=1}^{N_2} \bigr)$ be the repeat block decomposition of $b$ induced by the set $A(\mathcal{R}) \cap [k+1,2k]$.
Finally, we define $\varphi(a,b) = (\mathcal{R}, (u_m)_{m=1}^{N_1+1}, (y_m)_{m=1}^{N_2+1} )$.

By Lemma \ref{Lemma:Birthday}, note that
\begin{equation*}
\mu(a) \leq K^{2N_1} \prod_{m=1}^{N_1} \mu(v_m) \prod_{m=1}^{N_1+1} \mu( u_m),
\end{equation*}
and
\begin{equation*}
\mu(b) \leq K^{2N_2} \prod_{m=1}^{N_2} \mu(z_m) \prod_{m=1}^{N_2+1} \mu( y_m).
\end{equation*}
Furthermore, since each of the blocks $v_m$ and $z_m$ has length at least $n$, for all large enough $n$, we have that $\mu(v_m) \leq \gamma^{|v_m|}$ and $\mu(z_m) \leq \gamma^{|z_m|}$. Therefore for all large enough $n$, we have
\begin{equation*}
\mu(a) \leq K^{2N_1} \gamma^{\sum_m |v_m|} \prod_{m=1}^{N_1+1} \mu( u_m), 
\end{equation*}
and
\begin{equation*}
\mu(a) \leq K^{2N_2} \gamma^{\sum_m |z_m|} \prod_{m=1}^{N_2+1} \mu( y_m). 
\end{equation*}
Using that $N_1+N_2 \leq |\mathcal{R}| \leq 8k/n$ and $\sum_m |v_m| + \sum_m |z_m| = |A(\mathcal{R})| \geq 2k-j -n$ (by Lemma \ref{Lemma:RepeatArea}), we obtain
\begin{equation} \label{Eqn:BlueFish}
\mu(a) \mu(b) \leq \bigl(K^{16} \bigr)^{k/n} \gamma^{2k-j-n} \prod_{m=1}^{N_1+1} \mu( u_m) \prod_{m=1}^{N_2+1} \mu( y_m).
\end{equation}

Now define the projection $\pi : \varphi(D_{n,k}^j) \to \{ \mathcal{R} : \mathcal{R} \subset \mathcal{C}_n(F) \times \mathcal{C}_n(F) \}$, given by $\pi( \mathcal{R}, (u_m), (y_m) ) = \mathcal{R}$. Let $S = \pi \circ \varphi(D_{n,k}^j)$. Note that $|\mathcal{C}_n(F)| \leq 2k$, and so $|\mathcal{C}_n(F) \times \mathcal{C}_n(F)| \leq (2k)^2$. Since each $\mathcal{R} \in \pi \circ \varphi(D_{n,k}^j)$ satisfies $|\mathcal{R}| \leq 8k/n$, we then have that $|S| = |\pi \circ \varphi(D_{n,k}^j)| \leq |\mathcal{C}_n(F) \times \mathcal{C}_n(F)|^{8k/n} \leq (2k)^{16k/n}$. 

Let us now estimate $\mu \otimes \mu( D_{n,k}^j)$. By rearranging the sum, we get
\begin{align*}
\mu \otimes \mu( D_{n,k}^j ) & = \sum_{(a,b) \in D_{n,k}^j} \mu(a) \mu(b) \\
& = \sum_{\mathcal{R} \in S} \; \sum_{(\mathcal{R}, (u_m), (y_m)) \in \pi^{-1}( \mathcal{R} )} \; \sum_{(a,b) \in \varphi^{-1}( \mathcal{R}, (u_m), (y_m) )} \mu(a) \mu(b).
\end{align*}
Applying (\ref{Eqn:BlueFish}) to each term in the sum, we get
\begin{align*}
& \mu  \otimes \mu( D_{n,k}^j ) \\
&  \leq \bigl(K^{16} \bigr)^{k/n} \gamma^{2k-j-n}   \sum_{\mathcal{R} \in S} \; \sum_{(\mathcal{R}, (u_m), (y_m)) \in \pi^{-1}( \mathcal{R} )} \;  \prod_{m=1}^{N_1+1}  \mu( u_m) \prod_{m=1}^{N_2+1} \mu( y_m).
\end{align*} 
Then since the sum of $\mu(u_m)$ over any set of words $u_m$ of the same length is less than or equal to one, we see that
\begin{equation*} 
\mu \otimes \mu( D_{n,k}^j ) \leq  \bigl(K^{16} \bigr)^{k/n} \gamma^{2k-j-n} |S |.
\end{equation*}
Combining this estimate with the bound on $|S|$ established in the previous paragraph, we obtain 
\begin{equation*}
\mu \otimes \mu( D_{n,k}^j ) \leq  \bigl(2^{16}K^{16}k^{16} \bigr)^{k/n} \gamma^{2k-j-n} .
\end{equation*}

Recall that $k = o(n^2/\log(n))$. Then for all large enough $n$, we have $k \leq n^2$.
Let $p_2(x) = (2K)^{16} x^{32}$. 
Then by the previous display, for all large enough $n$, we obtain that  $\mu \otimes \mu( D_{n,k}^j ) \leq  p_2(n)^{k/n} \gamma^{2k-j-n}$.
\end{proof}

\begin{lemma} \label{Lemma:Love}
There exists a polynomial $p_3(x)$ such that for all large enough $n$, 
\begin{equation*}
\mu \otimes \mu( Q_{n,k} ) \leq p_3(n) e^{-2n (h(\mu) - \delta)} \gamma^n.
\end{equation*}
\end{lemma}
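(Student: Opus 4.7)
The plan is to bound $\mu \otimes \mu(Q_{n,k})$ via a union bound over the positions at which the pair $(u,v)$ shares an $n$-word, then bound each resulting term using the Gibbs/mixing properties from Lemma \ref{Lemma:Oscar} together with the $E_n$ constraint built into $G_{n,k}$. For each $(u,v) \in Q_{n,k}$ there exist $p_u, p_v \in \{1,\dots,\ell\}$ and $c \in B_n(X)$ with $u_{p_u}^{p_u+n-1} = c = v_{p_v}^{p_v+n-1}$, and so
\[
\mu \otimes \mu(Q_{n,k}) \leq \sum_{p_u, p_v=1}^{\ell} \sum_{c \in B_n(X)} \mu(A_{p_u, c}) \, \mu(A_{p_v, c}),
\]
where $A_{p, c} := \{u \in G_{n,k} : u_p^{p+n-1} = c\}$. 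Since the two factors are independent and bounded identically, the task reduces to producing a uniform bound on $\mu(A_{p, c})$.

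The key estimate I would establish is that there is a constant $K'$ (depending only on $K$) such that
\[
\mu(A_{p,c}) \leq K' \, e^{-(h(\mu) - \delta) n} \, \mu(c)
\]
for every $p \in \{1,\dots,\ell\}$ and every $c \in B_n(X)$. To prove this, recall that each $u \in G_{n,k}$ decomposes as $u = a \, u' \, a$ with $a \in E_n$ and $u' \in B_{k-2n}(X)$. When $c$ lies in the \emph{interior}, i.e.\ $n < p$ and $p+n-1 < \ell$, the constraints $u_1^n = a$, $u_p^{p+n-1} = c$, $u_\ell^k = a$ live on three disjoint intervals, and iterating the $\psi$-mixing bound from the second bullet of Lemma \ref{Lemma:Oscar} (as in Lemma \ref{Lemma:Birthday}) yields $\mu(u) \leq K^4 \mu(a)^2 \mu(c) \mu(w_1) \mu(w_2)$, where $w_1, w_2$ fill the two gaps. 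Summing over $w_1, w_2$ (each sum is at most $1$) and then over $a \in E_n$ using
\[
\sum_{a \in E_n} \mu(a)^2 \leq \Bigl(\max_{a \in E_n} \mu(a)\Bigr) \sum_{a \in E_n} \mu(a) \leq e^{-(h(\mu)-\delta)n}
\]
gives the bound. In the boundary cases $p \leq n$ or $p+n-1 \geq \ell$, the word $c$ overlaps with one of the anchor copies of $a$ and the constraints become dependent; I would then regroup the factorization so the overlap is absorbed into a single factor (for instance, when $p \leq n$, write $u = (a_1^{p-1} c) \cdot w_2 \cdot a$, apply the $\psi$-mixing bound together with $\mu(a_1^{p-1} c) \leq K \mu(a_1^{p-1}) \mu(c)$, and sum over the free prefix using $\sum_{a_1^{p-1}} \mu(a_1^{p-1}) \leq 1$ together with $\mu(a) \leq e^{-(h(\mu)-\delta)n}$ for $a \in E_n$). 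The same bound emerges with a slightly larger constant.

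Once the key estimate is in hand, the assembly is routine. Using $\mu(c) \leq \gamma^n$ for $c \in B_n(X)$ (valid for $n \geq n_0$ by Definition \ref{Def:Gamma}) together with $\sum_{c \in B_n(X)} \mu(c) = 1$, I obtain $\sum_{c} \mu(c)^2 \leq \gamma^n$, so each pair $(p_u, p_v)$ contributes at most $K'^2 e^{-2(h(\mu)-\delta) n} \gamma^n$. Multiplying by $\ell^2 \leq k^2$ and using $k = o(n^2/\log n)$ so that $k^2 \leq n^4$ for large $n$ produces the lemma with, say, $p_3(x) := K'^2 x^4$. The main obstacle is the bookkeeping in the boundary overlap cases: after regrouping the Gibbs factorization, one must verify that the final dependence on $c$ is still controlled by $\mu(c)$ (rather than by the measure of a proper sub-word of $c$), while the $E_n$ constraint on $a$ continues to supply the full $e^{-(h(\mu)-\delta)n}$ factor via the sum over the free portion of $a$.
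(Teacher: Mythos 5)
Your proof is correct, and it does give the claimed bound, but it takes a genuinely different route from the paper. The paper casts this estimate in the repeat-cover/block-decomposition framework used throughout Section \ref{Sect:RepeatProofs}: it concatenates $a \sqcup b$ on $F = [0,k)\sqcup[k+1,2k]$, fixes a three-pair repeat cover $\mathcal{R}$ (one pair for each $G_{n,k}$ anchor plus the pair $(I,J)$ for the shared word), applies Lemma \ref{Lemma:Birthday}, and splits $Q_{n,k}$ into two cases depending on whether the lexicographically minimal occurrence $J$ of the shared word in $b$ collides with $b$'s terminal $E_n$-anchor. You instead sum directly over positions $(p_u,p_v)$ and the shared word $c$, prove the factorized estimate $\mu(A_{p,c}) \le K'\,e^{-(h(\mu)-\delta)n}\,\mu(c)$, and then use independence of the two coordinates to multiply. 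The penalty structure extracted is identical in both proofs --- two factors of $e^{-(h(\mu)-\delta)n}$ from the $E_n$ anchors of $u$ and $v$, and one factor of $\gamma^n$ from the shared length-$n$ word --- so the two arguments are morally the same computation organized differently: the paper reuses its repeat-cover machinery and pays for it with an explicit case split on $J$, whereas your version is more elementary but pushes the case split into the proof of the key estimate (the overlap cases $p \le n$ and $p+n-1 \ge \ell$). Two things worth making explicit in a final write-up of your version: (i) in the overlap case, once $a_1^{p-1}$ and $c$ are fixed the anchor $a$ is determined, so the sum over $u \in A_{p,c}$ is genuinely indexed by $(a_1^{p-1}, w_2)$ and the bound $\sum_{a_1^{p-1}}\mu(a_1^{p-1}) \le 1$ is indeed what one wants; and (ii) since $k/n \to \infty$, for large $n$ the word $c$ cannot overlap both anchors simultaneously, so the three cases (interior, left overlap, right overlap) are exhaustive. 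With those points spelled out, your $p_3(x)=K'^2 x^4$ delivers the lemma.
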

\begin{proof}
Consider $n \geq n_0$. First define the set $F = [0,k) \sqcup [k+1,2k]$. 
Then we define a map $\varphi : Q_{n,k} \to \{ (\mathcal{R}, w) : \mathcal{J} \subset \mathcal{C}_n(S) \times \mathcal{C}_n(S), w \in \mathcal{A}^{ S \setminus A(\mathcal{R}) } \}$ as follows.
Let $(a,b) \in Q_{n,k}$. We let $a \sqcup b$ denote the element of $\mathcal{A}^F$ such that $(a \sqcup b)|_{[0,k)} = a$ and $(a \sqcup b)|_{[k+1,2k]} = b$. Since $W_n(a) \cap W_n(b) \neq \varnothing$, there exists $I \in \mathcal{C}_n([0,k))$ and $J \in \mathcal{C}_n([k+1,2k])$ such that $(a \sqcup b)|_I = (a \sqcup b)|_J$, and we assume that $J$ is (lexicographically) minimal among all such intervals. We partition $Q_{n,k}$ into $\tilde{Q}_{n,k}$ and $\hat{Q}_{n,k}$, where $\tilde{Q}_{n,k}$ consists of all pairs $(a,b)$ such that $J \cap [2\ell-n,2k] = \varnothing$, and $\hat{Q}_{n,k}$ contains the remaining pairs. Our definition of $\varphi(a,b)$ will depend on whether $(a,b)$ is in $\tilde{Q}_{n,k}$ or $\hat{Q}_{n,k}$.

First suppose that $(a,b) \in \tilde{Q}_{n,k}$. Let $\mathcal{R} \subset \mathcal{C}_n(F) \times \mathcal{C}_n(F)$ be the following set containing three pairs of intervals:  $\{ ( [0,n), [\ell-1,k) ), \, (I,J), \, ([k+1,k+n], [2k-n+1, 2k]) \}$. 
Note that the block decomposition of $a$ induced by $A(\mathcal{R}) \cap [0,k)$ has the form  $\bigl( u_1, v_1 \bigr)$, where $v_1 = a|_{[\ell-1,k)}$. Similarly, the block decomposition of $b$ induced by $A(\mathcal{R}) \cap [k+1,2k]$ has the form  $\bigl( (y_m)_{m=1}^2, (z_m)_{m=1}^{2} \bigr)$, where $z_1 = (a \sqcup b)|_{J}$ and $z_2 = (a \sqcup b)|_{[2k-n+1,2k]}$. Finally, we define $\varphi(a,b) = (\mathcal{R}, u_1, (y_m)_{m=1}^2 )$. Furthermore, we note that
\begin{align*}
\mu(a) & = \mu(u_1 v_1)  \leq K \mu(u_1) \mu(v_1),
\end{align*}
and
\begin{align*}
\mu(b) & = \mu( y_1 z_1 y_2 z_2)  \leq K^3 \mu(y_1) \mu(y_2) \mu(z_1) \mu(z_2).
\end{align*}
Since $a,b \in G_{n,k}$, we must have that $v_1, z_2 \in E_n$, and therefore $\mu(v_1) \leq e^{-(h(\mu)-\delta)n}$ and $\mu(z_2) \leq e^{-(h(\mu)-\delta)n}$. Also,  since $z_1$ has length $n \geq n_0$, we have that $\mu(z_1) \leq \gamma^n$. Putting these estimates together, we obtain
\begin{equation} \label{Eqn:Sonnet}
\mu(a) \mu(b) \leq K^4 e^{-2n(h(\mu)-\delta)} \gamma^n \mu(u_1) \mu(y_1) \mu(y_2).
\end{equation}

Now suppose that $(a,b) \in \hat{Q}_{n,k}$. In this case we let $\mathcal{R} \subset \mathcal{C}_n(F) \times \mathcal{C}_n(F)$ be a different set of three pairs of intervals: $\mathcal{R} = \{([0,n), \, [\ell-1,k)),(I,J), \, ([2k-n+1, 2k], [k+1,k+n])\}$. Note that the third pair of intervals listed is not in lexicographical order. Let $\bigl( u_1, v_1 \bigr)$ be the repeat block decomposition of $a$ induced by $A(\mathcal{R}) \cap [0,k)$, and let $\bigl( (z_m)_{m=1}^2, (y_m)_{m=1}^{2} \bigr)$ be the block decomposition induced by $A(\mathcal{R})|_{[k+1,2k]}$, by which we mean that $(a \sqcup b)|_{[k+1,2k]} = z_1 y_1 z_2 y_2$, where $z_1$ and $z_2$ have length $n$. In this case, we define $\varphi(a,b) = (\mathcal{R}, u_1, (y_m)_{m=1}^2 )$. Note that
\begin{align*}
\mu(a) & = \mu(u_1 v_1)  \leq K \mu(u_1) \mu(v_1),
\end{align*}
and
\begin{align*}
\mu(b) & = \mu(z_1 y_1 z_2 y_2)  \leq K^3 \mu(y_1) \mu(y_2) \mu(z_1) \mu(z_2).
\end{align*}
Since $a,b \in G_{n,k}$, we have that $v_1, z_1 \in E_n$, and thus $\mu(v_1) \leq e^{-(h(\mu)-\delta)n}$ and $\mu(z_1) \leq e^{-(h(\mu)-\delta)n}$. Also, since $z_2$ has length $n \geq n_0$,  we have that $\mu(z_2) \leq \gamma^n$. Combining these estimates, we see that
\begin{equation} \label{Eqn:Ode}
\mu(a) \mu(b) \leq K^4 e^{-2n(h(\mu)-\delta)} \gamma^n \mu(u_1) \mu(y_1) \mu(y_2).
\end{equation}

Now define the projection map $\pi : \varphi(Q_{n,k}) \to \{ \mathcal{R} : \mathcal{J} \subset \mathcal{C}_n(F) \times \mathcal{C}_n(F) \}$, given by $\pi(\mathcal{R},w) = \mathcal{R}$. Let $S = \pi \circ \varphi(Q_{n,k})$. Since $|\mathcal{C}_n(F)| \leq (2k)$, we get $|\mathcal{C}_n(F) \times \mathcal{C}_n(F)| \leq (2k)^2$. Moreover, since each $\mathcal{R}$ in $\pi \circ \varphi( Q_{n,k})$ satisfies $|\mathcal{R}| = 3$, we get $|S| = |\pi \circ \varphi(Q_{n,k})| \leq |\mathcal{C}_n(F) \times \mathcal{C}_n(F)|^3 \leq (2k)^6$. 

By rearranging the sum, we find
\begin{align*}
\mu \otimes \mu( Q_{n,k} ) &  = \sum_{(a,b) \in Q_{n,k}} \mu(a)\mu(b) \\
& = \sum_{ \mathcal{R} \in S} \; \sum_{(\mathcal{R},u_1,(y_m)_{m=1}^2) \in \pi^{-1}(\mathcal{R})} \; \sum_{(a,b) \in \varphi^{-1} (\mathcal{R}, u_1, (y_m)_{m=1}^2)} \mu(a) \mu(b).
\end{align*}
Then by applying the estimates (\ref{Eqn:Sonnet}) and (\ref{Eqn:Ode}) to each term, we get
\begin{align*}
\mu \otimes \mu( Q_{n,k} ) & \leq  \sum_{ \mathcal{R} \in S} \; \sum_{(\mathcal{R},u_1,(y_m)_{m=1}^2) \in \pi^{-1}(\mathcal{R})}  K^4 e^{-2n(h(\mu)-\delta)} \gamma^n \mu(u_1) \mu(y_1) \mu(y_2).
\end{align*}
Summing over all $u_1$, $y_1$, and $y_2$, we obtain
\begin{equation*}
\mu \otimes \mu( Q_{n,k} )  \leq    K^4 e^{-2n(h(\mu)-\delta)} \gamma^n |S| \leq K^4 (2k)^6  e^{-2n(h(\mu)-\delta)} \gamma^n,
\end{equation*}
where the second inequality uses the bound on $|S|$ established in the previous paragraph

Recall that $k = o(n^2/\log(n))$, and hence for all large enough $n$, we have $k \leq n^2$. 
Let $p_3(x) = 2^6 K^4 x^{12}$.
Then by the previous inequality, for all large enough $n$, we see that $\mu \otimes \mu( Q_{n,k} ) \leq p_3(n) e^{-2n(h(\mu)-\delta)} \gamma^n$.
\end{proof}

\begin{lemma} \label{Lemma:Peace}
There exists a polynomial $p_4(x)$ such that for all large enough $n$ and $1 \leq j \leq 2 \ell$,
\begin{equation*}
\mu \otimes \mu( Q_{n,k}^j ) \leq p_4(k)^{k/n} e^{- 2n (h(\mu) - \delta)} \gamma^{2\ell - j}.
\end{equation*} 
\end{lemma}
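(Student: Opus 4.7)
The plan is to combine the repeat-cover / block-decomposition framework from the proofs of Lemmas \ref{Lemma:NPS} and \ref{Lemma:Super} with the entropy-typicality argument for end-blocks used in the proof of Lemma \ref{Lemma:Love}. The new ingredient needed to reach the target exponent $\gamma^{2\ell-j}$, rather than the weaker $\gamma^{2\ell-j-n}$ that a naive combination would give, is a strengthened lower bound on $|A(\mathcal{R})|$ that exploits the $G_{n,k}$ structure of both $a$ and $b$ simultaneously.

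Given $(a,b)\in Q_{n,k}^j$, set $F=[0,k)\sqcup[k+1,2k]$, view $a\sqcup b\in\mathcal{A}^F$, and apply Lemma \ref{Lemma:EfficientRepeatCovers} to produce a repeat cover $\mathcal{R}$ of $a\sqcup b$ with $|\mathcal{R}|\leq 8k/n$. The key observation is that for any such repeat cover, both $[\ell-1,k)$ and $[2k-n+1,2k]$ must appear as $I_2$ of some pair in $\mathcal{R}$: since $a\in G_{n,k}$, the interval $[\ell-1,k)$ is a repeat of the lexicographically minimal occurrence of $a_1^n$, and the only length-$n$ interval contained in $F$ and containing position $k-1$ is $[\ell-1,k)$ itself; the analogous statement holds for $b$ and $[2k-n+1,2k]$. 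Consequently, if $r_1$ and $r_2$ denote the numbers of pairs in $\mathcal{R}$ whose $I_2$ lies in $[0,k)$ and $[k+1,2k]$ respectively, then $r_1\geq 1$ and $r_2\geq 1$.

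I would then carry out the per-component analogue of the argument in the proof of Lemma \ref{Lemma:RepeatArea}: within each component of $F$ the relevant $I_2$'s are distinct length-$n$ intervals, so sorting them by starting position and counting the new positions contributed gives $|A(\mathcal{R})\cap[0,k)|\geq n+r_1-1$ and $|A(\mathcal{R})\cap[k+1,2k]|\geq n+r_2-1$, hence
\[
|A(\mathcal{R})|\geq 2n+(r_1+r_2)-2 = 2n+(2\ell-j)-2,
\]
which is an improvement by roughly $n$ over what Lemma \ref{Lemma:RepeatArea} gives directly. I would then form block decompositions of $a$ and $b$ induced by $A(\mathcal{R})\cap[0,k)$ and $A(\mathcal{R})\cap[k+1,2k]$. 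The final block $v_*^a$ of $a$ contains $[\ell-1,k)$, has length $L_a\geq n$, and ends with $a_\ell^k\in E_n$, so writing $v_*^a=\tilde v_*^a\cdot a_\ell^k$ and applying Lemma \ref{Lemma:Oscar} yields $\mu(v_*^a)\leq K\,\gamma^{L_a-n-n_0}\,e^{-n(h(\mu)-\delta)}$, and the same analysis gives $\mu(v_*^b)\leq K\,\gamma^{L_b-n-n_0}\,e^{-n(h(\mu)-\delta)}$. Combining with $\mu(v)\leq\gamma^{|v|}$ on the other repeat-area blocks and applying Lemma \ref{Lemma:Birthday} to $a$ and $b$ separately produces
\[
\mu(a)\mu(b)\leq K^{2(N_a+N_b)+2}\,\gamma^{|A(\mathcal{R})|-2n-2n_0}\,e^{-2n(h(\mu)-\delta)}\prod_m\mu(u_m^a)\prod_m\mu(u_m^b),
\]
and inserting the improved lower bound on $|A(\mathcal{R})|$ extracts a factor of $\gamma^{2\ell-j}$ up to the constant $\gamma^{-2-2n_0}$.

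To finish I would mimic the last stages of the proof of Lemma \ref{Lemma:Super}: define $\varphi(a,b)=(\mathcal{R},(u_m^a),(u_m^b))$ and project to $\mathcal{R}$, bound the number of possible repeat covers by $|\mathcal{C}_n(F)\times\mathcal{C}_n(F)|^{8k/n}\leq(2k)^{16k/n}$, and sum the $u$-blocks of any fixed shape using $\sum_w\mu(w)\leq 1$ at each position; absorbing the constants $K^{O(1)}$, $\gamma^{-O(1)}$, and the $(2k)^{16k/n}$ factor into an appropriate polynomial $p_4(x)$ gives the stated bound $p_4(k)^{k/n}\,e^{-2n(h(\mu)-\delta)}\,\gamma^{2\ell-j}$. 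The main obstacle is the per-component lower bound on $|A(\mathcal{R})|$: without it one only obtains $\gamma^{2\ell-j-n}$, which is off by the factor $\gamma^{-n}$ that is exponential in $n$ and cannot be absorbed by the subexponential prefactor $p_4(k)^{k/n}$.
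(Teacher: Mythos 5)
Your overall strategy matches the paper's: form a repeat cover for $a\sqcup b$ on $F=[0,k)\sqcup[k+1,2k]$, decompose $a$ and $b$ into alternating repeated/non-repeated blocks, extract a factor $e^{-n(h(\mu)-\delta)}$ from the terminal $E_n$-suffix of each word (available because $a,b\in G_{n,k}$), bound the remaining repeat-area blocks by powers of $\gamma$, and close via the usual projection-to-$\mathcal{R}$ and sum-over-free-blocks argument. You have also correctly identified that a direct application of Lemma \ref{Lemma:RepeatArea} to $F$ is not enough once $2n$ positions are set aside for the two $E_n$-suffixes, and that a per-component strengthening is exactly what is needed to reach $\gamma^{2\ell-j}$; the paper's writeup is imprecise at exactly this point, so your extra care here is warranted.

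There is, however, a genuine slip in how you set up that per-component count. You define $r_1,r_2$ as the numbers of \emph{pairs in $\mathcal{R}$} whose $I_2$ lies in each component and then write $r_1+r_2=2\ell-j$. With that definition $r_1+r_2=|\mathcal{R}|\leq 8k/n$, which for large $n$ is far smaller than $2\ell-j$ (the latter can be on the order of $2k$). Consequently the resulting bound $|A(\mathcal{R})|\geq 2n+(r_1+r_2)-2$ does not scale with $2\ell-j$ at all, and the exponent $\gamma^{2\ell-j}$ does not follow. The fix is to count \emph{repeats} rather than cover-pairs: let $R_1$ and $R_2$ be the numbers of repeats $(I_1,I_2)$ of $a\sqcup b$ with $I_2\subset[0,k)$ and $I_2\subset[k+1,2k]$ respectively. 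Then $R_1+R_2=2\ell-j$ is the total number of repeats (exactly the quantity $r=a-j$ used in the proof of Lemma \ref{Lemma:RepeatArea}), every such $I_2$ lies in $A(\mathcal{R})$ by the definition of a repeat cover, the $I_2$'s are pairwise distinct, and your $G_{n,k}$-observation shows $R_1\geq 1$ and $R_2\geq 1$ because $[\ell-1,k)$ and $[2k-n+1,2k]$ are each forced to be a repeat-$I_2$. The sorted-union argument then gives $|A(\mathcal{R})\cap[0,k)|\geq n+R_1-1$ and $|A(\mathcal{R})\cap[k+1,2k]|\geq n+R_2-1$, so $|A(\mathcal{R})|\geq 2n+(2\ell-j)-2$, and with this correction the rest of your argument goes through as written.
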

\begin{proof}
Consider $n \geq n_0$ and $1 \leq j \leq 2\ell$.
Let $F = [0,k) \cup [k+1,2k]$. We begin by defining a map $\varphi : Q_{n,k}^j \to \{ (\mathcal{R}, w) : \mathcal{R} \subset \mathcal{C}_n(S) \times \mathcal{C}_n(S), w \in \mathcal{A}^{F \setminus A(\mathcal{R})} \}$ as follows.
Let $(a,b) \in Q_{n,k}^j$. We let $(a \sqcup b)$ denote the element of $\mathcal{A}^F$ such that $(a \sqcup b)|_{[0,k)} = a$ and $(a \sqcup b)|_{[k+1,2k]} = b$. Let $\mathcal{R}$ be a repeat cover for $a \sqcup b$ such that $|\mathcal{R}| \leq 4 |F|/n = 8k/n$, which exists by Lemma \ref{Lemma:EfficientRepeatCovers}. 
Then let $\bigl( (u_m)_{m=1}^{N_1+1}, (v_m)_{m=1}^{N_1} \bigr)$ be the repeat block decomposition of $a$ induced by the set $A(\mathcal{R}) \cap [0,k-n)$, and let $\bigl( (y_m)_{m=1}^{N_2+1}, (z_m)_{m=1}^{N_2} \bigr)$ be the repeat block decomposition for $b$ induced by the set $A(\mathcal{R}) \cap [k+1,2k-n]$. Additionally, let $v_{N_1+1} = a_{[k-n,k)}$ and $z_{N_2+1} = b_{[2k-n+1,2k]}$. 
Set $\varphi(a,b) = (\mathcal{R}, (u_m)_{m=1}^{N_1+1}, (y_m)_{m=1}^{N_2+1} )$.
 Furthermore, by Lemma \ref{Lemma:Birthday}, we have that
\begin{equation*}
\mu(a) \leq K^{2N_1+1} \prod_{m=1}^{N_1+1} \mu(v_m) \prod_{m=1}^{N_1+1} \mu( u_m),
\end{equation*}
and
\begin{equation*}
\mu(b) \leq K^{2N_2+1} \prod_{m=1}^{N_2+1} \mu(z_m) \prod_{m=1}^{N_2+1} \mu( y_m).
\end{equation*}
Since $v_{N_1+1}, z_{N_2+1} \in E_n$, we have $\mu(v_{N_1+1}) \leq e^{-n(h(\mu)-\delta)}$ and $\mu(z_{N_2+1}) \leq e^{-n(h(\mu)-\delta)}$. Also, for $m = 1, \dots, N_1 -1$, the length of $v_m$ is at least $n$, and we get $\mu(v_m) \leq \gamma^{|v_m|}$. For $v_{N_1}$, we always have $\mu(v_{N_1}) \leq \gamma^{|v_{N_1}|-n_0}$. Similarly, for $m = 1, \dots, N_2-1$, the length of $z_m$ is at least $n$, and we get $\mu(z_m) \leq \gamma^{|z_m|}$. As for $z_{N_2}$, we always have $\mu(z_{N_2}) \leq \gamma^{|z_{N_2}|-n_0}$. Then for large enough $n$, we have
\begin{equation*}
\mu(a) \leq K^{2N_1+1} \gamma^{\sum_m |v_m| - n_0} e^{-n(h(\mu)-\delta)} \prod_{m=1}^{N_1+1} \mu( u_m),
\end{equation*}
and
\begin{equation*}
\mu(a) \leq K^{2N_2+1} \gamma^{\sum_m |z_m| - n_0} e^{-n(h(\mu)-\delta)} \prod_{m=1}^{N_2+1} \mu( y_m).
\end{equation*}
Using that $N_1+N_2 \leq |\mathcal{R}| \leq 8k/n$ and $\sum_m |v_m| + \sum_m |z_m| = |A(\mathcal{R})| \geq 2\ell-j - 2 $ (by Lemma \ref{Lemma:RepeatArea}), we obtain
\begin{equation} \label{Eqn:RedFish}
\mu(a) \mu(b) \leq \bigl(K^{32} \bigr)^{k/n} \gamma^{2\ell - j - 2 - 2n_0} e^{-2n(h(\mu)-\delta)} \prod_{m=1}^{N_1+1} \mu( u_m) \prod_{m=1}^{N_2+1} \mu( y_m).
\end{equation}

Now we define the projection map $\pi : \varphi(Q_{n,k}^j) \to \{ \mathcal{R} : \mathcal{R} \subset \mathcal{C}_n(F) \times \mathcal{C}_n(F) \}$, given by $\pi( \mathcal{R}, (u_m), (y_m) ) = \mathcal{R}$. Let $S = \pi \circ \varphi(Q_{n,k}^j)$. Since $|\mathcal{C}_n(F)| \leq 2k$, we see that $|\mathcal{C}_n(F) \times \mathcal{C}_n(F)| \leq (2k)^2$. Moreover, since each $\mathcal{R}$ in $S$ satisfies $|\mathcal{R}| \leq 8k/n$, we estimate $|S| = |\pi \circ \varphi( Q_{n,k}^j )| \leq | \mathcal{C}_n(F) \times \mathcal{C}_n(F)|^{8k/n} \leq (2^{16} k^{16})^{k/n}$.

Let us now estimate $\mu \otimes \mu( Q_{n,k}^j)$. 
By rearranging the sum, we get
\begin{align*}
\mu \otimes \mu( Q_{n,k}^j ) & = \sum_{(a,b) \in Q_{n,k}^j} \mu(a) \mu(b) \\
& = \sum_{\mathcal{R} \in S} \; \sum_{(\mathcal{R}, (u_m), (y_m)) \in \pi^{-1}( \mathcal{R} )} \; \sum_{(a,b) \in \varphi^{-1}( \mathcal{R}, (u_m), (y_m) )} \mu(a) \mu(b).
\end{align*}
By applying (\ref{Eqn:RedFish}) to each term in the sum, we see that
\begin{align*}
& \mu  \otimes \mu( Q_{n,k}^j ) \\
&  \leq  \sum_{\mathcal{R} \in S} \; \sum_{(\mathcal{R}, (u_m), (y_m)) \in \pi^{-1}( \mathcal{R} )}   (K^{32} \gamma^{-2n_0-2})^{k/n} \gamma^{2\ell-j} e^{-n(h(\mu)-\delta)} \prod_{m=1}^{N_1+1} \mu( u_m) \prod_{m=1}^{N_2+1} \mu( y_m),
\end{align*} 
and then summing over all $u_m$ and $y_m$ gives
\begin{equation} \label{Eqn:Ecke}
\mu \otimes \mu( Q_{n,k}^j ) \leq  (K^{32} \gamma^{-2n_0 - 2})^{k/n} e^{-n(h(\mu)-\delta)} \gamma^{2\ell-j} | S |.
\end{equation}
Combining this estimate with the bound on $|S|$ from the previous paragraph, we obtain
\begin{equation*}
\mu \otimes \mu( Q_{n,k}^j ) \leq  \bigl(2^{32}K^{32} \gamma^{-2n_0-2}k^{16} \bigr)^{k/n} e^{-n(h(\mu)-\delta)} \gamma^{2\ell-j} .
\end{equation*}

Recall that  $k = o(n^2/\log(n))$, and hence for all large enough $n$, we have $k \leq n^2$.
Let $p_4(x) = (2K)^{32} \gamma^{-2n_0-2}x^{32}$.
Then by the previous displayed inequality, for all large enough $n$ and all $1 \leq j \leq 2\ell$, we have $\mu \otimes \mu( Q_{n,k}^j ) \leq  p_4(n)^{k/n} e^{-n(h(\mu)-\delta)} \gamma^{2\ell-j}$.
%
\end{proof}

\section{Moment bounds} \label{Sect:MomentBounds}

In this section we prove properties (\ref{Item:1})-(\ref{Item:4}) concerning the expectation and variance of $\phi_{n,k}$ and $\psi_{n,k}$, which are used in the proof of Theorem \ref{Thm:Pressure}. Throughout this section, we use the same environment (notation, parameters, and assumptions) as in the proof of Theorem \ref{Thm:Pressure}.

\begin{lemma} \label{Lemma:Skate}
For all $n \geq 1$, the expectation of $\phi_{n,k}$ satisfies
\begin{equation*}
\mathbb{E} \bigl[ \phi_{n,k} \bigr] \geq K^{-1} \alpha^{\ell} e^{Pk}.
\end{equation*}
Furthermore, 
\begin{equation*}
\lim_n \frac{1}{k} \log \mathbb{E}\bigl[ \phi_{n,k} \bigr] = P + \log( \alpha)
\end{equation*}
\end{lemma}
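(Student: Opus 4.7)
The plan is to evaluate the expectation directly and then separately derive a lower bound (which yields the explicit constant) and a matching upper bound (which is needed only in the limit). Starting from the definition $\phi_{n,k} = \sum_{u \in B_k(X)} e^{S_k f(u)} \xi_u$ together with equation (\ref{Eqn:BasicExp}), linearity gives
\begin{equation*}
\mathbb{E}\bigl[\phi_{n,k}\bigr] = \sum_{u \in B_k(X)} e^{S_k f(u)} \alpha^{|W_n(u)|}.
\end{equation*}
Since $|W_n(u)| \leq \ell$ and $\alpha \leq 1$, the trivial estimate $\alpha^{|W_n(u)|} \geq \alpha^{\ell}$ yields $\mathbb{E}[\phi_{n,k}] \geq \alpha^{\ell} \Lambda_k(X)$. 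The Gibbs property (\ref{Eqn:GibbsProp}) implies $e^{S_k f(u)} \geq K^{-1} e^{P_X(f) k} \mu([u])$ for each $u \in B_k(X)$, so summing gives $\Lambda_k(X) \geq K^{-1} e^{P_X(f) k}$ and hence the desired explicit lower bound $\mathbb{E}[\phi_{n,k}] \geq K^{-1} \alpha^{\ell} e^{P_X(f) k}$.

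From the lower bound, $\frac{1}{k} \log \mathbb{E}[\phi_{n,k}] \geq -\frac{\log K}{k} + \frac{\ell}{k} \log \alpha + P_X(f)$, and since $\ell/k \to 1$ (because $n/k \to 0$), we obtain $\liminf_n \frac{1}{k} \log \mathbb{E}[\phi_{n,k}] \geq P_X(f) + \log \alpha$.

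For the matching upper bound, the key is to show that most of the mass in the sum is carried by words $u$ with $|W_n(u)|$ close to $\ell$. I would partition $B_k(X)$ according to the value $j = |W_n(u)|$ and use the Gibbs property in the opposite direction, $e^{S_k f(u)} \leq K e^{P_X(f) k} \mu([u])$, together with Lemma \ref{Lemma:NPS}, to estimate
\begin{equation*}
\sum_{u \in B_{n,k}^j} e^{S_k f(u)} \leq K e^{P_X(f) k}\, \mu\bigl(B_{n,k}^j\bigr) \leq K e^{P_X(f) k} p_1(n)^{k/n} \gamma^{k-j}.
\end{equation*}
Summing against $\alpha^j$ for $j = 1, \dots, \ell$ produces a geometric series in $\alpha/\gamma$. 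Since $\alpha > \gamma$ by construction, this series is dominated by its largest term $(\alpha/\gamma)^{\ell}$ up to a constant factor, yielding
\begin{equation*}
\mathbb{E}\bigl[\phi_{n,k}\bigr] \leq C\, K e^{P_X(f) k} p_1(n)^{k/n} \gamma^{n-1} \alpha^{\ell},
\end{equation*}
using $k - \ell = n-1$.

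Taking logarithms, dividing by $k$, and letting $n \to \infty$, the terms $\frac{\log(CK)}{k}$, $\frac{\log p_1(n)}{n}$, and $\frac{(n-1)\log \gamma}{k}$ all vanish (the last since $n/k \to 0$), leaving $\limsup_n \frac{1}{k} \log \mathbb{E}[\phi_{n,k}] \leq P_X(f) + \log \alpha$, which combines with the liminf to give the stated limit. The main subtlety is ensuring that the geometric sum in the upper bound is controlled, which is precisely why the choice $\gamma \in (\gamma_0,\alpha)$ in the proof of Theorem \ref{Thm:Pressure} was made; the polynomial prefactor $p_1(n)^{k/n}$ is harmless because $k/n \cdot \log p_1(n) = o(k)$ under our scaling assumption $k = o(n^2/\log n)$.
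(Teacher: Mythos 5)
Your proposal is correct and follows essentially the same route as the paper: the lower bound via $\alpha^{|W_n(u)|}\geq\alpha^{\ell}$ combined with the Gibbs lower bound on $e^{S_kf(u)}$, and the upper bound by partitioning over $j=|W_n(u)|$, applying Lemma \ref{Lemma:NPS}, and summing the resulting geometric series in $\alpha/\gamma$. (Your constant $\gamma^{n-1}$ is in fact the correct simplification of $\gamma^{k-\ell}$; the paper's $\gamma^{n}$ appears to be a harmless slip that does not affect the limit.)
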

\begin{proof}
Let $n \geq 1$. Then by (\ref{Eqn:BasicExp}) and our choice of $K$, we have
\begin{align*}
\mathbb{E} \bigl[ \phi_{n,k} \bigr] 
 & = \sum_{u \in B_k(X)} e^{S_k f(u)} \alpha^{|W_n(u)|} \\
 & = \sum_{j = 1}^{\ell} \alpha^j \sum_{u \in B_{n,k}^j} e^{S_k f(u)}. \\
 & \geq K^{-1} e^{P k}  \sum_{j = 1}^{\ell} \alpha^j \mu \bigl(B_{n,k}^j \bigr).
\end{align*}
Using that $\alpha^j \geq \alpha^{\ell}$ for all $j \leq \ell$ and $\sum_j \mu(B_{n,k}^j) = \mu(B_n(X)) = 1$, we get
\begin{align*}
\mathbb{E} \bigl[ \phi_{n,k} \bigr]  \geq K^{-1} \alpha^{\ell} e^{Pk},
\end{align*}
which establishes the first conclusion of the lemma.

Now we consider letting $n$ tend to infinity.
By the first conclusion of the lemma, we have that
\begin{equation*}
\liminf_n \frac{1}{k} \log \mathbb{E} \bigl[ \phi_{n,k} \bigr] \geq  P + \log \alpha.
\end{equation*}
Also, for any $n$, our choice of $K$ yields 
\begin{equation*}
\mathbb{E} \bigl[ \phi_{n,k} \bigr]  = \sum_{j=1}^{k-n+1} \alpha^j \sum_{u \in B_{n,k}^j} e^{S_k f(w)} \leq K e^{P k} \sum_{j=1}^{k-n+1} \alpha^j \mu(B_{n,k}^j).
\end{equation*}

Recall that $\alpha > \gamma$, and therefore $\alpha \gamma^{-1} > 1$.
By Lemma \ref{Lemma:NPS}, there exists a polynomial $p_1(x)$ such that for large enough $n$, we have
\begin{align*}
\mathbb{E} \bigl[ \phi_{n,k} \bigr] & \leq K e^{P k} \sum_{j=1}^{\ell} \alpha^j \mu(B_{n,k}^j) \\
& \leq K e^{P k} \sum_{j=1}^{\ell} \alpha^j p_1(n)^{k/n} \gamma^{k-j} \\
& \leq K e^{Pk} p_1(n)^{k/n} \gamma^k \sum_{j=1}^{\ell} (\alpha \gamma^{-1})^j \\
& \leq K e^{Pk} p_1(n)^{k/n} \gamma^k (\alpha \gamma^{-1})^{\ell} \frac{1}{1-(\alpha^{-1} \gamma)} \\
& = K e^{Pk} p_1(n)^{k/n} \alpha^{\ell} \gamma^{n} \frac{1}{1-(\alpha^{-1} \gamma)}.
\end{align*}
Since $n/k \to 0$ and $n^{-1} \log p_1(n) \to 0$, we obtain that
\begin{equation*}
\limsup_n \frac{1}{k} \log \mathbb{E} \bigl[ \phi_{n,k} \bigr] \leq P + \log \alpha,
\end{equation*}
which finishes the proof.
\end{proof}

\begin{lemma} \label{Lemma:Board}
For all $n \geq 1$, the expectation of $\psi_{n,k}$ satsifies
\begin{equation*}
\mathbb{E} \bigl[ \psi_{n,k} \bigr] \geq |E_n|^{-1} K^{-1} \alpha^{\ell} e^{Pk} \mu \bigl( G_{n,k} \bigr).
\end{equation*}
Furthermore,
\begin{equation*}
\lim_n \frac{1}{k} \log \mathbb{E}\bigl[ \psi_{n,k} \bigr] = P + \log( \alpha).
\end{equation*}
\end{lemma}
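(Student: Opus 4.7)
The proof will follow the template of Lemma \ref{Lemma:Skate} essentially verbatim, with the added twist that the sum ranges over the restricted set $G_{n,k}$ and is normalized by $|E_n|^{-1}$. For the first (non-asymptotic) conclusion, the plan is to observe that any $u \in G_{n,k}$ has $|W_n(u)| \leq \ell$, so $\alpha^{|W_n(u)|} \geq \alpha^\ell$ because $\alpha \leq 1$. Combining this with the Gibbs property $e^{S_k f(u)} \geq K^{-1} e^{Pk} \mu(u)$ yields
\begin{equation*}
\mathbb{E}\bigl[ \psi_{n,k} \bigr] = \frac{1}{|E_n|} \sum_{u \in G_{n,k}} e^{S_k f(u)} \alpha^{|W_n(u)|} \geq \frac{1}{|E_n|} K^{-1} \alpha^{\ell} e^{Pk} \mu(G_{n,k}),
\end{equation*}
which is the desired pointwise estimate.

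For the limit, I will prove matching exponential upper and lower bounds. For the \emph{lower bound}, I will plug in $|E_n| \leq e^{(h(\mu) + \delta) n}$ (a consequence of the fact that each $u \in E_n$ has $\mu(u) \geq e^{-(h(\mu)+\delta)n}$ and $\mu(E_n) \leq 1$) and the lower bound on $\mu(G_{n,k})$ provided by Lemma \ref{Lemma:NewAsia}. Together these produce an additive correction of order $O(n)$ in the log; since $n/k \to 0$ and $\ell/k \to 1$, we get $\liminf_n k^{-1} \log \mathbb{E}[\psi_{n,k}] \geq P + \log \alpha$.

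For the \emph{upper bound}, I will mirror the second half of Lemma \ref{Lemma:Skate}. Decomposing by $|W_n(u)| = j$, applying the reverse Gibbs bound, and using $G_{n,k}^j \subset B_{n,k}^j$ together with Lemma \ref{Lemma:NPS} gives
\begin{equation*}
\mathbb{E}[\psi_{n,k}] \leq \frac{K e^{Pk}}{|E_n|} \sum_{j=1}^{\ell} \alpha^j \mu\bigl( B_{n,k}^j \bigr) \leq \frac{K e^{Pk}}{|E_n|} p_1(n)^{k/n} \gamma^k \sum_{j=1}^{\ell} (\alpha \gamma^{-1})^j.
\end{equation*}
Since $\alpha > \gamma$, the geometric series is controlled by its last term, producing a factor $\alpha^{\ell} \gamma^{n-1}$ times a polynomial-in-$n$ correction. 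Bounding $|E_n|^{-1} \leq 1$ for $n$ large, dividing by $k$, and using $n/k \to 0$ and $k^{-1} \log p_1(n)^{k/n} \to 0$ gives $\limsup_n k^{-1} \log \mathbb{E}[\psi_{n,k}] \leq P + \log \alpha$.

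The only subtlety, and the one point worth tracking, is the effect of the $|E_n|^{-1}$ normalization: it suppresses the expectation by roughly $e^{-h(\mu) n}$, and this must be matched by the fact that $G_{n,k}$ carries $\mu$-mass at least $e^{-(h(\mu)+\delta)n}$ (up to constants). Because both terms are of order $n$ and $n = o(k)$, they vanish after normalizing by $k$, leaving the clean limit $P + \log \alpha$.
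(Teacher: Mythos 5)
Your proposal is correct, and the first conclusion and the lower bound on the limit match the paper's argument essentially step for step (the paper uses the cruder bound $|E_n| \leq |\mathcal{A}|^n$ instead of your $|E_n| \leq e^{(h(\mu)+\delta)n}$, but both are $O(n)$ in the log and both work). The one place you diverge is the upper bound on $\limsup_n k^{-1}\log \mathbb{E}[\psi_{n,k}]$. You reprove it from scratch by decomposing over $j = |W_n(u)|$, using $G_{n,k}^j \subset B_{n,k}^j$ together with Lemma \ref{Lemma:NPS}, and summing the geometric series — which is a valid computation and mirrors the second half of Lemma \ref{Lemma:Skate}. The paper instead uses the much shorter observation that $\psi_{n,k} \leq \phi_{n,k}$ pointwise (since $G_{n,k} \subset B_k(X)$ and $|E_n|^{-1} \leq 1$), so $\mathbb{E}[\psi_{n,k}] \leq \mathbb{E}[\phi_{n,k}]$ and the upper bound is inherited directly from Lemma \ref{Lemma:Skate}. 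Your route costs a few more lines but is self-contained; the paper's route reuses work already done. Both are correct, and the conclusion is the same.
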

\begin{proof}
Let $n \geq 1$. Then by (\ref{Eqn:BasicExp}) and our choice of $K$, we have
\begin{align*}
\mathbb{E} \bigl[ \psi_{n,k} \bigr]  
& = \frac{1}{|E_n|} \sum_{u \in G_{n,k}} e^{S_k f(u)} \alpha^{|W_n(u)|} \\
 & =  \frac{1}{|E_n|} \sum_{j = 1}^{\ell} \alpha^j \sum_{u \in G_{n,k}^j} e^{S_k f(u)} \\
 & \geq |E_n|^{-1} K^{-1} e^{Pk} \sum_{j = 1}^{\ell} \alpha^j \mu \bigl( G_{n,k}^j \bigr).
\end{align*}
Since $\alpha^j \geq \alpha^{\ell}$ for all $j \leq \ell$ and $\sum_j \mu(G_{n,k}^j) = \mu(G_{n,k})$, we get
\begin{align*}
\mathbb{E} \bigl[ \psi_{n,k} \bigr]    \geq |E_n|^{-1} K^{-1} \alpha^{\ell} e^{Pk} \mu \bigl(G_{n,k} \bigr),
\end{align*}
which establishes the first conclusion of the lemma. 

Now we consider letting $n$ tend to infinity. 
By the first conclusion of the lemma, we have that
\begin{align*}
\liminf_n \frac{1}{k} \log \mathbb{E} \bigl[ \psi_{n,k} \bigr] & \geq  P + \log \alpha + \liminf_n \frac{1}{k} \log |E_n|^{-1} + \frac{1}{k} \log \mu \bigl( G_{n,k} \bigr). 
\end{align*}
Note that $|E_n| \leq |\mathcal{A}|^n$, and by Lemma \ref{Lemma:NewAsia}, for large enough $n$, we have $\mu(G_{n,k}) \geq 2^{-1} K^{-1} e^{- n (h(\mu)+\delta)}$. Therefore
\begin{equation*}
\liminf_n \frac{1}{k} \log \mathbb{E} \bigl[ \psi_{n,k} \bigr]  \geq  P + \log \alpha  + \liminf_n \biggl( \frac{n}{k} \log |\mathcal{A}|^{-1} + \frac{n}{k} \log (e^{-(h(\mu)+\delta)}) \biggr).
\end{equation*}
Finally, using that $n/k \to 0$, we obtain
\begin{equation*}
\liminf_n \frac{1}{k} \log \mathbb{E} \bigl[ \psi_{n,k} \bigr]  \geq  P + \log \alpha.
\end{equation*}
Also, by Lemma \ref{Lemma:Skate} and the fact that $\psi_{n,k} \leq \phi_{n,k}$, we have
\begin{equation*}
\limsup_n \frac{1}{k} \log \mathbb{E} \bigl[ \psi_{n,k} \bigr] \leq \limsup_n \frac{1}{k} \log \mathbb{E} \bigl[ \phi_{n,k} \bigr] \leq P + \log \alpha.
\end{equation*}
Taken together, the previous two inequalities yield that
\begin{equation*}
\lim_n  \frac{1}{k} \log \mathbb{E} \bigl[ \psi_{n,k} \bigr] = P + \log \alpha,
\end{equation*}
as desired.
\end{proof}

\begin{lemma} \label{Lemma:Roller}
There exists $\rho_1 >0$ such that for all large enough $n$, 
\begin{equation*}
\frac{ \Var \bigl[ \phi_{n,k} \bigr] }{ \mathbb{E} \bigl[ \phi_{n,k} \bigr]^2 } \leq e^{- \rho_1 n};
\end{equation*}
\end{lemma}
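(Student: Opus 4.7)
The plan is to expand the variance as a double sum, isolate the pairs that contribute, and then apply Lemma \ref{Lemma:Super} directly.

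First I would write
\begin{equation*}
\Var[\phi_{n,k}] = \sum_{u,v \in B_k(X)} e^{S_kf(u) + S_kf(v)} \Cov(\xi_u, \xi_v),
\end{equation*}
and invoke \eqref{Eqn:BasicVar}: the covariance vanishes unless $W_n(u) \cap W_n(v) \neq \varnothing$, and in that case is bounded by $\alpha^{|W_n(u) \cup W_n(v)|}$ (since $1 - \alpha^{|W_n(u) \cap W_n(v)|} \leq 1$). Grouping the surviving pairs by $j = |W_n(u) \cup W_n(v)|$, they are precisely the sets $D_{n,k}^j$ for $1 \leq j \leq 2\ell$. Then I would use the Gibbs property \eqref{Eqn:GibbsProp} to replace $e^{S_kf(u)}$ by at most $K\mu(u) e^{P_X(f) k}$, producing
\begin{equation*}
\Var[\phi_{n,k}] \leq K^2 e^{2 P_X(f) k} \sum_{j=1}^{2\ell} \alpha^j \, \mu \otimes \mu(D_{n,k}^j).
\end{equation*}

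Next I would plug in Lemma \ref{Lemma:Super}, which gives $\mu \otimes \mu(D_{n,k}^j) \leq p_2(n)^{k/n} \gamma^{2\ell - j + n}$, obtaining
\begin{equation*}
\Var[\phi_{n,k}] \leq K^2 e^{2 P_X(f) k} p_2(n)^{k/n} \gamma^n \sum_{j=1}^{2\ell} \alpha^j \gamma^{2\ell - j}.
\end{equation*}
Since $\alpha > \gamma$, the geometric sum is dominated by its last term and satisfies
\begin{equation*}
\sum_{j=1}^{2\ell} \alpha^j \gamma^{2\ell - j} = \gamma^{2\ell} \sum_{j=1}^{2\ell} (\alpha/\gamma)^j \leq \frac{\alpha}{\alpha - \gamma} \, \alpha^{2\ell}.
\end{equation*}
This is the crucial step: the $\alpha^{2\ell}$ factor is exactly what must appear to cancel against the lower bound on $\mathbb{E}[\phi_{n,k}]^2$.

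Finally, I would combine these estimates with the lower bound $\mathbb{E}[\phi_{n,k}] \geq K^{-1} \alpha^\ell e^{P_X(f) k}$ from Lemma \ref{Lemma:Skate}, so that $\mathbb{E}[\phi_{n,k}]^2 \geq K^{-2} \alpha^{2\ell} e^{2 P_X(f) k}$. All the factors $\alpha^{2\ell}$ and $e^{2 P_X(f) k}$ cancel, leaving
\begin{equation*}
\frac{\Var[\phi_{n,k}]}{\mathbb{E}[\phi_{n,k}]^2} \leq \frac{K^4 \alpha}{\alpha - \gamma} \, p_2(n)^{k/n} \gamma^n.
\end{equation*}
To conclude, since $p_2$ is a polynomial and $k = o(n^2/\log n)$, we have $\frac{1}{n} \log p_2(n)^{k/n} = \frac{k \log p_2(n)}{n^2} \to 0$, so $p_2(n)^{k/n} = e^{o(n)}$. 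Choosing any $\rho_1 \in (0, -\log \gamma)$ (which is positive because $\gamma < 1$) then yields the bound $e^{-\rho_1 n}$ for all sufficiently large $n$. The main (and really only) obstacle is the bookkeeping in the geometric sum and confirming that the subexponential factor $p_2(n)^{k/n}$ is indeed subexponential in $n$ under the chosen growth rate of $k$; the exponent $-\log \gamma$ is the hard-earned gain that Lemma \ref{Lemma:Super} provides through its extra $\gamma^n$ factor.
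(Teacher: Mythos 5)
Your argument matches the paper's proof essentially step for step: the same expansion of the variance as a sum of covariances, the same bound $\Cov(\xi_u,\xi_v)\leq\alpha^{|W_n(u)\cup W_n(v)|}$, the same grouping by $j$ into $D_{n,k}^j$, the same invocation of Lemmas \ref{Lemma:Skate} and \ref{Lemma:Super}, the same geometric series estimate (the paper's $C=1/(1-\alpha^{-1}\gamma)$ is your $\alpha/(\alpha-\gamma)$), and the same concluding observation that $p_2(n)^{k/n}\gamma^n$ decays like $e^{-\rho_1 n}$ since $k=o(n^2/\log n)$ makes the polynomial factor subexponential. This is correct and is the paper's approach; the only cosmetic discrepancy is that you sum $j$ up to $2\ell$ while Lemma \ref{Lemma:Super} covers $j\leq 2\ell-1$, but $D_{n,k}^{2\ell}=\varnothing$ when the intersection is nonempty, so nothing is lost.
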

\begin{proof}
Using the fact that the variance of a sum is the sum of the covariances,  (\ref{Eqn:BasicVar}), and our choice of $K$, we have
\begin{align*}
\Var \bigl[ \phi_{n,k} \bigr] & = \sum_{u,v \in B_k(X)} \alpha^{|W_n(u) \cup W_n(v)|} \Bigl(1 - \alpha^{|W_n(u) \cap W_n(v)|} \Bigr) e^{S_k f(u) + S_k f(v)} \\
 & \leq \sum_{j=1}^{2\ell-1} \alpha^j \sum_{(u,v) \in D_{n,k}^j} e^{S_k f(u) + S_k f(v)} \\
 & \leq K^2 e^{2Pk}  \sum_{j=1}^{2\ell-1} \alpha^j \mu \otimes \mu \bigl( D_{n,k}^j \bigr).
\end{align*}

Let $C = 1/(1- (\alpha^{-1} \gamma))$. 
Then by the lower bound on $\mathbb{E}[ \phi_{n,k} ]$ from Lemma \ref{Lemma:Skate} and the upper bound on $\mu \otimes \mu(D_{n,k}^j)$ from Lemma \ref{Lemma:Super}, there exists a polynomial $p_2(x)$ such that for large enough $n$, we have
\begin{align*}
\frac{\Var \bigl[ \phi_{n,k} \bigr] }{ \mathbb{E} \bigl[ \phi_{n,k} \bigr] ^2 } & \leq \frac{ K^2e^{2 P k} \sum_{j=1}^{2\ell-1} \alpha^j \mu \otimes \mu \bigl( D_{n,k}^j \bigr) }{ K^{-2} \alpha^{2\ell} e^{2Pk}  } \\
& = K^4 \alpha^{-2\ell} \sum_{j = 1}^{2\ell-1} \alpha^j  \mu \otimes \mu \bigl( D_{n,k}^j \bigr) \\
 & \leq K^4 \alpha^{-2\ell} \sum_{j = 1}^{2\ell-1} \alpha^j  p_2(n)^{k/n} \gamma^{2\ell+n - j } \\
 & \leq K^4 p_2(n)^{k/n} \alpha^{-2\ell} \gamma^{2\ell+n}   \sum_{j = 1}^{2\ell-1} (\alpha \gamma^{ - 1})^j \\
 & \leq K^4 p_2(n)^{k/n}  \alpha^{-2\ell}  \gamma^{2\ell+n} (\alpha \gamma^{-1})^{2\ell} C.
 \end{align*}
 Rewriting this estimate, we find
\begin{align*}
 \frac{\Var \bigl[ \phi_{n,k} \bigr] }{ \mathbb{E} \bigl[ \phi_{n,k} \bigr] ^2 } & = \exp \biggl( 4 \log K + \log C + \frac{k}{n} \log p_2(n) + n \log \gamma \biggr) \\
 & \leq \exp \biggl( n \Bigl( q \frac{k \log n}{n^2} + \log \gamma + \frac{4}{n} \log K  + \frac{1}{n}\log C \Bigr) \biggr),
\end{align*}
where $p_2(x) \leq x^q$ for all large enough $x$.
Since $k = o( n^2 / \log n)$ and $\log \gamma < 0$, we obtain the desired bound.
\end{proof}

\begin{lemma} \label{Lemma:Blade}
There exists $\rho_2 >0$ such that for all large enough $n$, 
\begin{equation*}
\frac{ \Var \bigl[ \psi_{n,k} \bigr] }{ \mathbb{E} \bigl[ \psi_{n,k} \bigr]^2 } \leq e^{- \rho_2 n}.
\end{equation*}
\end{lemma}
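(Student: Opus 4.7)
The plan is to mirror the structure of the proof of Lemma \ref{Lemma:Roller}, but with the random variable $\psi_{n,k}$ in place of $\phi_{n,k}$, and with the new ingredient that the pair-sum over $Q_{n,k}$ must be split into two regimes so as to combine Lemmas \ref{Lemma:Peace} and \ref{Lemma:Love} in a complementary way. First I would expand the variance using the covariance formula (\ref{Eqn:BasicVar}), observe that $\Cov(\xi_u,\xi_v)=0$ whenever $W_n(u) \cap W_n(v) = \varnothing$ so only pairs in $Q_{n,k}$ contribute, and use the Gibbs property to pass from $e^{S_k f(u) + S_k f(v)}$ to $K^2 e^{2Pk}\mu(u)\mu(v)$. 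Dropping the harmless factor $(1-\alpha^{|W_n(u) \cap W_n(v)|}) \leq 1$, this reduces the task to estimating
\begin{equation*}
\sum_{j=1}^{2\ell-1} \alpha^j\, \mu \otimes \mu\bigl(Q_{n,k}^j\bigr).
\end{equation*}

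Next I would split this sum at the threshold $j^* = 2\ell - n$. On the range $1 \leq j \leq j^*$, apply Lemma \ref{Lemma:Peace} to obtain $\mu\otimes\mu(Q_{n,k}^j) \leq p_4(n)^{k/n} e^{-2n(h(\mu)-\delta)} \gamma^{2\ell - j}$; the resulting geometric series $\sum_{j} (\alpha/\gamma)^j$ is dominated by its top term at $j = 2\ell - n$, contributing a factor $C \, \alpha^{2\ell - n} \gamma^n$ (where $C = \alpha/(\alpha-\gamma)$). On the range $j > j^*$, use the crude bound $\alpha^j \leq \alpha^{2\ell - n}$ and pull out this factor, then apply Lemma \ref{Lemma:Love} to the remaining sum $\sum_{j > j^*} \mu\otimes\mu(Q_{n,k}^j) \leq \mu\otimes\mu(Q_{n,k}) \leq p_3(n) e^{-2n(h(\mu)-\delta)} \gamma^n$. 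The two regimes combine to give
\begin{equation*}
\sum_{j=1}^{2\ell-1} \alpha^j\, \mu\otimes\mu\bigl(Q_{n,k}^j\bigr) \;\leq\; p(n)^{k/n}\, e^{-2n(h(\mu)-\delta)}\, \alpha^{2\ell - n}\, \gamma^n
\end{equation*}
for a suitable polynomial $p$ absorbing $C$, $p_3$ and $p_4$.

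For the denominator, I would use the lower bound of Lemma \ref{Lemma:Board} together with the lower bound on $\mu(G_{n,k})$ from Lemma \ref{Lemma:NewAsia} to obtain $\mathbb{E}[\psi_{n,k}]^2 \geq \tfrac{1}{4} |E_n|^{-2} K^{-4} \alpha^{2\ell} e^{2Pk} e^{-2n(h(\mu)+\delta)}$ for all large $n$. Dividing and noting the cancellations of $|E_n|^{-2} e^{2Pk}$ and of $\alpha^{2\ell}$ against $\alpha^{2\ell - n}$, the ratio becomes
\begin{equation*}
\frac{\Var[\psi_{n,k}]}{\mathbb{E}[\psi_{n,k}]^2} \;\leq\; C'\, p(n)^{k/n}\, \Bigl(\frac{\gamma}{\alpha}\Bigr)^n\, e^{4\delta n} \;=\; C'\, p(n)^{k/n}\, e^{-n\bigl(\log(\alpha/\gamma)\,-\,4\delta\bigr)}.
\end{equation*}
The choice $\delta < \tfrac{1}{4}\log(\alpha\gamma^{-1})$ made in the proof of Theorem \ref{Thm:Pressure} makes the exponent $\log(\alpha/\gamma) - 4\delta$ strictly positive, and since $k = o(n^2/\log n)$ forces $p(n)^{k/n} = e^{o(n)}$, any $\rho_2 < \log(\alpha/\gamma) - 4\delta$ works for all sufficiently large $n$.

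The main obstacle is the exponent management: a naive application of Lemma \ref{Lemma:Peace} alone produces the factor $e^{-2n(h(\mu)-\delta)}$ in the numerator, which when compared against $\mu(G_{n,k})^2 \gtrsim e^{-2n(h(\mu)+\delta)}$ in the denominator leaves an unwanted $e^{4\delta n}$ that grows in $n$. The essential trick is recognizing that Lemma \ref{Lemma:Love} supplies an additional $\gamma^n$ factor which, via the threshold split above, translates into an extra $(\gamma/\alpha)^n$ in the ratio; this is precisely the margin needed to beat $e^{4\delta n}$, and the calibration between $\delta$ and $\log(\alpha/\gamma)$ in the setup of Theorem \ref{Thm:Pressure} is exactly what ensures this margin is positive.
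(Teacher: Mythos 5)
Your proposal is correct and follows essentially the same route as the paper's proof: expand the variance via the covariance formula, pass to $\mu\otimes\mu$ using the Gibbs property, split the sum over $j$ at the threshold $2\ell - n$ so that Lemma \ref{Lemma:Peace} handles the geometric series on the low range and Lemma \ref{Lemma:Love} supplies the crucial extra $\gamma^n$ on the high range, and then divide by the lower bound from Lemmas \ref{Lemma:Board} and \ref{Lemma:NewAsia} to land on $(\gamma/\alpha)^n e^{4\delta n}$ up to subexponential corrections. Your closing observation about why the naive application of Lemma \ref{Lemma:Peace} alone fails and why the $\gamma^n$ margin from Lemma \ref{Lemma:Love} is calibrated against $\delta < \tfrac{1}{4}\log(\alpha\gamma^{-1})$ is exactly the point the paper's split is engineered to exploit.
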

\begin{proof}
Let $b = 2\ell-n$. Using the fact that the variance of a sum is the sum of the covariances, (\ref{Eqn:BasicVar}), and our choice of $K$, we have
\begin{align*}
\Var \bigl[ \psi_{n,k} \bigr] & = \frac{1}{|E_n|^2} \sum_{u,v \in G_{n,k}} \alpha^{|W_n(u) \cup W_n(v)|} \Bigl(1 - \alpha^{|W_n(u) \cap W_n(v)|} \Bigr) e^{S_k f(u) + S_k f(v)} \\
 & \leq \frac{1}{|E_n|^2} \sum_{j=1}^{2\ell} \alpha^j \sum_{(u,v) \in Q_{n,k}^j} e^{S_k f(u) + S_k f(v)} \\
 & \leq K^2 \frac{ e^{2Pk} }{|E_n|^2}  \sum_{j=1}^{2\ell} \alpha^j \mu \otimes \mu \bigl( Q_{n,k}^j \bigr).
\end{align*}
Dividing by $\mathbb{E}[ \psi_{n,k}]$ and using the lower bound on $\mathbb{E}[ \psi_{n,k}]$ in Lemma \ref{Lemma:Board} and the lower bound on $\mu(G_{n,k})$ in Lemma \ref{Lemma:NewAsia}, we see that
\begin{align*}
\frac{\Var \bigl[ \psi_{n,k} \bigr] }{ \mathbb{E} \bigl[ \psi_{n,k} \bigr] ^2 } & \leq \frac{ K^2 e^{2 P k} \sum_{j=1}^{2\ell-1} \alpha^j \mu \otimes \mu \bigl( Q_{n,k}^j \bigr) }{ K^{-2} \alpha^{2\ell} e^{2Pk} \mu(G_{n,k})^2  } \\
& \leq \frac{K^4 \alpha^{-2\ell} \sum_{j = 1}^{2\ell-1} \alpha^j  \mu \otimes \mu \bigl( Q_{n,k}^j \bigr)}{ (2K)^{-2} e^{-2n(h(\mu)+\delta)}} \\
& = (2K)^6 \alpha^{-2\ell} e^{2n(h(\mu)+\delta)} \Biggl( \sum_{j = 1}^{b-1} \alpha^j  \mu \otimes \mu \bigl( Q_{n,k}^j \bigr) + \sum_{j=b}^{2\ell-1} \alpha^j  \mu \otimes \mu \bigl( Q_{n,k}^j \bigr) \Biggr).
\end{align*}
Let $C = 1/(1 - (\alpha^{-1} \gamma))$. 
Note that $\alpha^j \leq \alpha^b$ for $j \geq b$. Applying this fact and the upper bounds on $\mu \otimes \mu(Q_{n,k})$ and $\mu \otimes \mu (Q_{n,k}^j)$ from Lemmas \ref{Lemma:Love} and \ref{Lemma:Peace}, respectively, we get that there are polynomials $p_3(x)$ and $p_4(x)$ such that for all large enough $n$,
\begin{align*}
\frac{\Var \bigl[ \psi_{n,k} \bigr] }{ \mathbb{E} \bigl[ \psi_{n,k} \bigr] ^2 } & 
  \leq (2K)^6 \alpha^{-2\ell} e^{2n(h(\mu)+\delta)} \Biggl( \sum_{j = 1}^{b-1} \alpha^j  p_4(n)^{k/n} e^{- 2n (h(\mu) - \delta)} \gamma^{2\ell - j} + \alpha^{b} p_3(n) e^{-2n (h(\mu) - \delta)} \gamma^n \Biggr) \\
 & \leq (2K)^6 \alpha^{-2 \ell} e^{4n \delta} \Biggl( p_4(n)^{k/n} \gamma^{2\ell} \sum_{j=1}^{b-1} (\alpha \gamma^{-1})^j + \alpha^{b} \gamma^n p_3(n) \Biggr) \\
 & \leq (2K)^6 \alpha^{-2 \ell} e^{4n \delta} \biggl( p_4(n)^{k/n} \gamma^{2\ell} C (\alpha \gamma^{-1})^b + \alpha^{b} \gamma^n p_3(n) \biggr) \\
 & = (2K)^6 e^{4n \delta} \gamma^n \alpha^{-n} \Bigl( C p_4(n)^{k/n} + p_3(n) \Bigr).
 \end{align*}
 Rewriting this estimate, we have
 \begin{equation*}
 \frac{\Var \bigl[ \psi_{n,k} \bigr] }{ \mathbb{E} \bigl[ \psi_{n,k} \bigr] ^2 } \leq \exp \Biggl( n \biggl( \log(\gamma \alpha^{-1}) + 4 \delta + \frac{6}{n} \log(2K) + \frac{1}{n} \log \bigl( C p_4(n)^{k/n} + p_3(n)  \bigr) \biggr) \Biggr).
\end{equation*}
Let $q >1$ be such that for all large enough $x$, we have $C p_4(x)^{k/n} + p_3(x) \leq x^{qk/n}$.
Then for all large enough $n$, we get
\begin{equation*}
\frac{\Var \bigl[ \psi_{n,k} \bigr] }{ \mathbb{E} \bigl[ \psi_{n,k} \bigr] ^2 } \leq \exp \Biggl( n \biggl( \log(\gamma \alpha^{-1}) + 4 \delta + \frac{6}{n} \log(2K) + \frac{q k}{n^2} \log n  \bigr) \biggr) \Biggr).
\end{equation*}
Since $k = o(n^2/\log(n))$, and $\log(\gamma \alpha^{-1}) + 4\delta < 0$ (by our choice of $\delta$ in the proof of Theorem \ref{Thm:Pressure}), we obtain the desired bound.
\end{proof}

\section{Bounds on pressure} \label{Sect:UBandLB}

In this section we work with the same notation, parameters, and assumptions as in the proof of Theorem \ref{Thm:Pressure}.

\begin{lemma} \label{Lemma:UB}
For each $n$,
\begin{equation*}
P_{Y_n}(f) \leq \frac{1}{k} \log \phi_{n,k}.
\end{equation*}
\end{lemma}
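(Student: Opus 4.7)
The plan is a short comparison-of-partition-functions argument combined with Fekete's lemma. First I would unpack the two quantities: $\Lambda_k(Y_n) = \sum_{w \in B_k(Y_n)} e^{S_k f(w)}$ (with the supremum defining $S_k f(w)$ taken over $Y_n \cap [w]$), while $\phi_{n,k} = \sum_{u \in B_k(X)} e^{S_k f(u)} \xi_u$ (with the supremum taken over $X \cap [u]$, and $\xi_u = 1$ exactly when $W_n(u) \cap \mathcal{F}_n = \varnothing$). The goal is therefore to show $\Lambda_k(Y_n) \leq \phi_{n,k}$, and then feed this into the subadditive definition of pressure.

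For the comparison, I would verify two inclusions. First, $B_k(Y_n) \subset \{u \in B_k(X) : \xi_u = 1\}$: if $w \in B_k(Y_n)$, pick $x \in Y_n$ with $x_0 \dots x_{k-1} = w$; since every length-$n$ subword of $x$ avoids $\mathcal{F}_n$, so does every length-$n$ subword of $w$, which is precisely $\xi_w = 1$. Second, for any such $w$, $Y_n \cap [w] \subset X \cap [w]$, so the supremum defining $S_k f(w)$ in $Y_n$ is bounded above by the one in $X$. Applying both observations term-by-term gives $\Lambda_k(Y_n) \leq \phi_{n,k}$.

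To pass from the $k$-th partition function to the pressure, I would invoke subadditivity of $m \mapsto \log \Lambda_m(Y_n)$: decomposing each $w \in B_{m+m'}(Y_n)$ as $uv$ with $u \in B_m(Y_n)$, $v \in B_{m'}(Y_n)$, the bound $S_{m+m'} f(w) \leq S_m f(u) + S_{m'} f(v)$ is immediate, and summing over a superset of decompositions yields $\Lambda_{m+m'}(Y_n) \leq \Lambda_m(Y_n) \Lambda_{m'}(Y_n)$. By Fekete's lemma, $P_{Y_n}(f) = \inf_m \frac{1}{m} \log \Lambda_m(Y_n) \leq \frac{1}{k} \log \Lambda_k(Y_n) \leq \frac{1}{k} \log \phi_{n,k}$, which is the claimed inequality (with the degenerate case $Y_n = \varnothing$ forcing $\phi_{n,k} = 0$ as well, so the inequality reads $-\infty \leq -\infty$).

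There is really no hard step here; the only thing worth being careful about is that $S_k f$ is defined using a supremum over a subsystem, so one must note $Y_n \subset X$ when comparing the exponents. Everything else is the standard subadditive pressure machinery.
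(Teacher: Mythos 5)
Your proof is correct and follows essentially the same route as the paper: reduce to showing $\Lambda_k(Y_n) \leq \phi_{n,k}$ via the inclusion $B_k(Y_n) \subset \{u \in B_k(X) : \xi_u = 1\}$ (and the comparison of the two $S_k f$'s, which the paper leaves implicit), then invoke subadditivity to get $P_{Y_n}(f) \leq \frac{1}{k}\log\Lambda_k(Y_n)$. One small side-remark is off: if $Y_n = \varnothing$ it does \emph{not} follow that $\phi_{n,k} = 0$, since there may be words of length $k$ in $X$ avoiding $\mathcal{F}_n$ even when no bi-infinite point avoids $\mathcal{F}_n$; but the inequality still holds trivially in that case with the convention $P_{\varnothing}(f) = -\infty$, so this does not affect the argument.
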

\begin{proof}
By subadditivity in the definition of pressure, we have that for all $m \geq 1$,
\begin{align*}
P_{Y_n}(f) & \leq \frac{1}{m} \log \sum_{u \in B_m(Y_n)} e^{S_m f(u)}.
\end{align*}
We apply this inequality with $m = k$.
Also, since $B_k(Y_n) \subset \{ u \in B_k(X) : \xi_u = 1\}$, we have
\begin{align*}
 \frac{1}{k} \log \sum_{u \in B_k(Y_n)} e^{S_m f(u)} & \leq \frac{1}{k} \log \sum_{u \in B_k(X)} e^{S_k f(u)} \xi_u \\
  & = \frac{1}{k} \log \phi_{n,k}.
\end{align*}
Combining the two previous inequalities yields the desired conclusion.
\end{proof}

\begin{lemma} \label{Lemma:LB}
For any $\epsilon >0$, for all large enough $n$,
\begin{equation*}
\frac{1}{k} \log \psi_{n,k} - \epsilon/2 \leq P_{Y_n}(f).
\end{equation*}
\end{lemma}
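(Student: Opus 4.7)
The plan is to construct many distinct length-$m$ words in $Y_n$ out of the building blocks contributing to $\psi_{n,k}$ and then pass from $\frac{1}{m}\log \Lambda_m(Y_n)$ to $P_{Y_n}(f)$ via submultiplicativity.

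If $\psi_{n,k}=0$ the inequality is trivial, so I'll assume $\psi_{n,k}>0$. Writing $\psi_{n,k}$ as an average over $w\in E_n$ and applying pigeonhole, I will fix $w^*\in E_n$ such that $Z:=\sum_{u\in A} e^{S_k f(u)}\geq \psi_{n,k}$, where $A=\{u\in G_{n,k}:u_1^n=u_\ell^k=w^*,\,\xi_u=1\}$. The combinatorial heart of the proof is then the claim that for $u,u'\in A$ the overlap-concatenation
\begin{equation*}
u\star u' := u_1 u_2\cdots u_k u'_{n+1} u'_{n+2}\cdots u'_k
\end{equation*}
(a word of length $2k-n$) belongs to $B_{2k-n}(Y_n)$. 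To check this, I will verify that every length-$n$ subword of $u\star u'$ is a length-$n$ subword of $u$ or $u'$; the only non-trivial case is a subword beginning at position $j\in[\ell+1,k]$, but the identity $u_{\ell+i-1}=u'_i$ (coming from $u_\ell^k=w^*=u'_1{}^n$) rewrites it as the length-$n$ subword of $u'$ starting at position $j-\ell+1\in[2,n]$. Since $\xi_u=\xi_{u'}=1$ none of these are in $\mathcal{F}_n$, and for $n$ large enough $X$'s own forbidden words are automatic. Iterating, every $M$-fold $\star$-concatenation $u^{(1)}\star\cdots\star u^{(M)}$ lies in $B_{m_M}(Y_n)$ with $m_M=M(k-n)+n$, and distinct tuples give distinct concatenations.

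Second, using the standard estimate $|S_k f(u)-\sum_{j=0}^{k-1} f\circ\sigma^j(x)|\leq C_0$ for any $x\in X\cap[u]$ with $C_0$ depending only on the H\"older norm of $f$, I will decompose a Birkhoff sum over $u^{(1)}\star\cdots\star u^{(M)}$ into $M$ blocks of length $k$ with $M-1$ overlaps of length $n$, yielding
\begin{equation*}
S_{m_M} f\bigl(u^{(1)}\star\cdots\star u^{(M)}\bigr)\geq \sum_{i=1}^M S_k f(u^{(i)})-MC_0-(M-1)n\|f\|_\infty.
\end{equation*}
Summing $e^{S_{m_M}f(\cdot)}$ over tuples in $A^M$ gives $\Lambda_{m_M}(Y_n)\geq Z^M\exp(-MC_0-(M-1)n\|f\|_\infty)$. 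Submultiplicativity of $\Lambda_m(Y_n)$ ensures $\frac{1}{m}\log\Lambda_m(Y_n)\to P_{Y_n}(f)$, so taking this limit along $m=m_M$ and using $Z\geq \psi_{n,k}$ gives
\begin{equation*}
P_{Y_n}(f)\geq \frac{\log \psi_{n,k} - C_0 - n\|f\|_\infty}{k-n}.
\end{equation*}

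Finally, to convert the factor $\frac{1}{k-n}$ to $\frac{1}{k}$ within the allowed error, I will invoke the deterministic bounds $\psi_{n,k}\leq K e^{kP_X(f)}/|E_n|$ (from the Gibbs property) and $\psi_{n,k}\geq e^{-k\|f\|_\infty}/|E_n|$ (valid because $\psi_{n,k}>0$), together with $1\leq |E_n|\leq|\mathcal{A}|^n$ and $n/k\to 0$, to conclude that $\frac{1}{k}|\log\psi_{n,k}|$ is uniformly bounded. Then $\bigl|\frac{1}{k-n}-\frac{1}{k}\bigr|\cdot|\log\psi_{n,k}|$ and $\frac{C_0+n\|f\|_\infty}{k-n}$ are both $O(n/k)=o(1)$, each below $\epsilon/4$ for large $n$, yielding the conclusion. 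The main obstacle is the junction analysis for the $\star$-operation: one must carefully index positions to verify that bridge length-$n$ subwords coincide with subwords already contained in one of the constituent blocks, a step that crucially uses the defining property $u_1^n=u_\ell^k$ of $G_{n,k}$.
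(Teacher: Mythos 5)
Your proof is correct and follows essentially the same strategy as the paper: pigeonhole to fix a single block $w^*\in E_n$, build arbitrarily long admissible words in $Y_n$ by overlap-concatenating the building blocks, and pass to the limit via Fekete subadditivity, absorbing $O(n/k)$ error terms. Your explicit verification that every length-$n$ bridge window of $u\star u'$ is already a subword of $u$ or $u'$, and your careful conversion from the $\frac{1}{k-n}$ normalization to $\frac{1}{k}$ (using a priori bounds on $\psi_{n,k}$ to control $|\log\psi_{n,k}|/k$ even when $\log\psi_{n,k}<0$), are more detailed than the paper's treatment via $Z_{q\ell}(v)$, but the underlying argument is the same.
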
 
\begin{proof}
Let $\mathcal{F} = \mathcal{F}_n$ and $Y = Y_n$. For $v \in E_n$, and $m \geq n$, we let
\begin{equation*}
Z_m(v) = \biggl\{ u \in B_m(X) : W_n(u) \cap \mathcal{F} = \varnothing, \text{ and } \forall q \in \{0,\dots,\lfloor m/\ell-1\rfloor\}, \, u_{q \ell+1}^{q\ell+n} = v \biggr\}.
\end{equation*}
Note that $\psi_{n,k}$ may be viewed as an average over the set $E_n$:
\begin{equation*}
\psi_{n,k} = \frac{1}{|E_n|} \sum_{v \in E_n} \sum_{u \in Z_k(v)} e^{S_kf(u)}.
\end{equation*}
Since the average over a finite set is always less than or equal to the maximum, there exists $v \in E_n$ such that
\begin{equation*}
\psi_{n,k} \leq \sum_{ u \in Z_k(v) } e^{S_k f(u) }.
\end{equation*}
For the sake of this proof, if $u \in B_m(X)$, then we let $\underline{S}_m f(u) = \inf_{x \in [u]} \sum_{j=0}^{m-1} f \circ \sigma^{j}(x)$. 
Observe that elements of $Z_{\ell}(v)$ can be arbitrarily concatenated to form words in $Y$. Hence, for any $q \in \N$, we note that $Z_{q \ell}(v) \subset B_{q\ell}(Y)$, and then we have
\begin{align*}
\sum_{ u \in B_{q\ell}(Y) } e^{S_{q\ell} f(u)} & \geq \sum_{u \in Z_{q \ell}(v)} e^{S_{q \ell} f(u)} \\
& \geq \sum_{u \in Z_{q \ell}(v)} e^{\underline{S}_{q \ell} f(u)}.
\end{align*}
Then by our choice of $K$,  we get 
\begin{align*}
\sum_{ u \in B_{q\ell}(Y) } e^{S_{q\ell} f(u)} & \geq  K^{-q} \sum_{u_0 \dots u_{q-1} \in Z_{q \ell}(v)} e^{\sum_{i =0}^{q-1} \underline{S}_{\ell} f(u_i)} \\
&  = K^{-q} \sum_{u_0 \in Z_{\ell}(v)} \cdots \sum_{u_{q-1} \in Z_{\ell}(v)} e^{\sum_{i =0}^{q-1} \underline{S}_{\ell} f(u_i)} \\
& = K^{-q} \biggl( \sum_{u \in Z_{\ell}(v)} e^{\underline{S}_{\ell}(u)} \biggr)^q \\
& \geq K^{-2q} \biggl( \sum_{u \in Z_{\ell}(v)} e^{S_{\ell}(u)} \biggr)^q \\
& \geq K^{-2q} \biggl( \sum_{u \in Z_{k}(v)} e^{S_{k}(u)} \biggr)^q e^{-\| f \|_{\infty} n q},
\end{align*}
where $\| f \|_{\infty} = \sup_{x \in X} |f(x)|$.
Now take logarithm, divide by $q \ell$, and let $q$ tend to infinity:
\begin{align*}
P_Y(f) \geq \frac{1}{\ell} \log \sum_{ u \in Z_k(v) } e^{S_k f(u) } - \frac{2 q \log K}{\ell} - \| f \|_{\infty} \frac{n}{\ell}.
\end{align*}
Then
\begin{equation*}
P_Y(f) \geq \frac{1}{k} \log \psi_{n,k} - \frac{2 q \log K}{\ell} - \| f \|_{\infty} \frac{n}{\ell}.
\end{equation*}
Finally, since $n/\ell \to 0$, we may choose $n$ large enough that 
\begin{equation*}
\frac{2 q \log K}{\ell} + \| f \|_{\infty} \frac{n}{\ell} < \epsilon/2,
\end{equation*}
which finishes the proof of the lemma.
\end{proof}

\section{Connection between pressure and escape rate} \label{Sect:Connections}

Here we relate the notions of pressure and escape rate. For a hole $H$ in an SFT $X$, we define the \textit{survivor set} to be the set of points that never fall into the hole (in either forward or backward time):
\begin{equation*}
Y = X \setminus \Biggl( \bigcup_{m \in \mathbb{Z}} \sigma^{-m}(H) \Biggr). 
\end{equation*}
For an SFT $(X,\sigma)$, a hole $H$ consisting of a finite union of cylinder sets, and an equilibrium state $\mu$ associated to a H\"{o}lder continuous potential function $f$, the following proposition relates the escape rate of $\mu$ through the hole $H$ to the pressure of $f$ on the survivor set $Y$. Although various versions of this result appear to be well-known (see, \textit{e.g.}, \cite{CM1,CMS1997}), we could not find an explicit reference for it, and we include a proof for completeness. For analogous results in various smooth settings, see the discussion of the \textit{escape rate formula} in \cite{BDM2010} and references therein.

\begin{proposition} \label{Prop:Crayola}
Let $X$ be a non-trivial mixing SFT, $f : X \to \R$ a H\"{o}lder continuous potential, and $\mu$ the Gibbs measure associated to $f$. Further, let $H$ be a finite union of cylinder sets in $X$, and let $Y$ be the survivor set of the open system $(X,\sigma,H)$. Then
\begin{equation*}
- \varrho(\mu : H) = P_X(f) - P_Y(f). 
\end{equation*}
\end{proposition}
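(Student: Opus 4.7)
The plan is to express $\mu(A_m)$, where $A_m := \{x \in X : \sigma^k(x) \notin H \text{ for } 0 \leq k \leq m-1\}$, as a sum over cylinder sets, apply the Gibbs property, and then identify the resulting exponential rate with $P_Y(f) - P_X(f)$.

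First I would write $H = \bigcup_{h \in \mathcal{H}} [h]$ for some finite $\mathcal{H} \subset B_L(X)$ and note that the survivor set $Y$ coincides with the SFT obtained from $X$ by additionally forbidding the words in $\mathcal{H}$. Because membership in $A_m$ depends only on the coordinates $x_0, \dots, x_{m+L-2}$,
\begin{equation*}
A_m = \bigsqcup_{w \in L_{m+L-1}(Y)} [w],
\end{equation*}
where $L_k(Y) := \{w \in B_k(X) : w \text{ contains no subword in } \mathcal{H}\}$ denotes the locally admissible words of length $k$ for $Y$. Applying the Gibbs bound (\ref{Eqn:GibbsProp}) to each cylinder and summing, I would obtain constants $c_1, c_2 > 0$ independent of $m$ such that
\begin{equation*}
c_1 \, e^{-P_X(f)(m+L-1)} \, \Lambda'_{m+L-1}(Y) \leq \mu(A_m) \leq c_2 \, e^{-P_X(f)(m+L-1)} \, \Lambda'_{m+L-1}(Y),
\end{equation*}
where $\Lambda'_k(Y) := \sum_{w \in L_k(Y)} e^{S_k f(w)}$ (with $S_k f$ taken as a sup over $[w] \cap X$; by H\"{o}lder continuity of $f$ this differs from the sup over $[w] \cap Y$ by a bounded additive error whenever the latter is nonempty). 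The proposition then reduces to proving $\lim_m \tfrac{1}{m} \log \Lambda'_m(Y) = P_Y(f)$, together with the existence of the limit defining $\varrho(\mu : H)$.

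The inequality $\liminf_m \tfrac{1}{m} \log \Lambda'_m(Y) \geq P_Y(f)$ is immediate from $B_m(Y) \subset L_m(Y)$. The matching upper bound is the core technical step: $Y$ need not be irreducible (let alone mixing) even though $X$ is, so $L_m(Y)$ can strictly exceed $B_m(Y)$. I would use the spectral decomposition of the SFT $Y$: its non-wandering set is a disjoint union of finitely many transitive basic sets $\Omega_1, \dots, \Omega_N$, and the induced quotient graph on these sets is acyclic. This yields a constant $K = K(Y)$ such that every $w \in L_m(Y)$ decomposes as $w = u_0 v_1 u_1 \cdots v_r u_r$ with $r \leq N$, each $|u_j| \leq K$, and each $v_j \in B_{|v_j|}(\Omega_{i_j})$ for some index $i_j$. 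Since $\Lambda_k(\Omega_i) = e^{k P_{\Omega_i}(f) + o(k)}$ and $P_Y(f) = \max_i P_{\Omega_i}(f)$, summing $e^{S_m f(w)}$ over the polynomially many decomposition types yields $\Lambda'_m(Y) \leq C \, m^N \, e^{m P_Y(f) + o(m)}$. Combined with the trivial lower bound $\Lambda_m(Y) \geq \Lambda_m(\Omega_{i^*}) \geq e^{m P_Y(f) + o(m)}$ for $i^*$ attaining the maximum, this gives $\limsup_m \tfrac{1}{m} \log \Lambda'_m(Y) \leq P_Y(f)$.

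Existence of $\varrho(\mu:H)$ itself follows from approximate subadditivity $\mu(A_{m+g+n}) \leq K \mu(A_m) \mu(A_n)$ for $g \geq g_0$, obtained from Lemma~\ref{Lemma:Oscar} via the inclusion $A_{m+g+n} \subset A_m \cap \sigma^{-(m+g)} A_n$. Combining the estimates then yields $-\varrho(\mu : H) = P_X(f) - P_Y(f)$. The hardest step is the spectral-decomposition argument bounding $\Lambda'_m(Y)$ by $\Lambda_m(Y)$; for mixing $Y$ this is trivial, but in the general reducible SFT case one must exploit the finite decomposition into basic sets and the acyclicity of the quotient graph to control the transient pieces.
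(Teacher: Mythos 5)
Your proposal is correct, and the overall reduction is the same as the paper's: both arguments use the Gibbs property to squeeze $\mu(A_m)$ between $K^{\mp1} e^{-P_X(f)k}\Lambda'_k(Y)$, where $\Lambda'_k(Y)$ is the partition sum over \emph{locally} admissible words, so the whole weight of the proof falls on showing $\limsup_k \tfrac1k \log \Lambda'_k(Y) \leq P_Y(f)$. Where you diverge from the paper is precisely in that step, and the divergence is genuine. The paper proves it by a Ledrappier--Walters-type argument: take the empirical measures on locally admissible words, average over the orbit, pass to a weak$^*$ limit $\nu$, show that $\nu$ is supported on $Y$ because any cylinder disjoint from $Y$ is eventually avoided by all long locally admissible words, and then invoke upper semicontinuity of entropy plus the variational principle to get $\limsup \leq h(\nu) + \int f\,d\nu \leq P_Y(f)$. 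You instead argue combinatorially via the spectral decomposition of the (possibly reducible) SFT $Y$: after recoding to a $1$-step presentation, each locally admissible word is a concatenation of at most $N$ blocks lying in transitive components $\Omega_{i_j}$, separated by boundedly short transient pieces; summing over the polynomially many decomposition types, using $\Lambda_k(\Omega_i) \leq D_\eps e^{k(P_{\Omega_i}(f)+\eps)}$ and $P_Y(f) = \max_i P_{\Omega_i}(f)$, gives the bound. Both routes are correct. The paper's measure-theoretic argument is softer and applies verbatim to arbitrary closed shift-invariant $Y$ (not just SFTs); your combinatorial argument is more elementary and self-contained, avoiding upper semicontinuity of entropy entirely, but it leans on $Y$ being an SFT, which requires the recoding step you left implicit. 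Two small points worth tightening if you write this up: you should say explicitly that you pass to a higher-block presentation so that ``locally admissible'' becomes ``is a path in a graph,'' and that the Hölder/bounded-distortion errors incurred when splitting $S_m f(w)$ across the blocks $v_1,\dots,v_r$ and reattributing each $S_{|v_j|}f$ to the sup over $[v_j]\cap\Omega_{i_j}$ are uniformly bounded because $r$ and the $|u_j|$ are bounded and $f$ is Hölder. The subadditivity remark at the end is unnecessary, since once the matching bounds force $\liminf = \limsup$ the limit defining $\varrho(\mu:H)$ exists automatically.
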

\begin{proof}
Let $K$ satisfy the conclusions of Lemma \ref{Lemma:Oscar} for $X$, $f$, and $\mu$.
We suppose without loss of generality that $H$ is the union of cylinder sets corresponding to words of length $n$.
For $k \geq n$, let $B_k(X,H)$ denote the set of $w \in B_k(X)$ such that $w$ contains no subword in $H$, and let $P = P_X(f)$.  Let $\ell = \ell(k) = k - n+1$. Recall that
\begin{equation*}
M_{\ell} = \Bigl\{ x \in X : \forall j \in \{0,\dots,\ell-1\}, \, \sigma^j(x) \notin H \Bigr\},
\end{equation*}
so that we have $\mu(M_{\ell}) = \mu( B_k(X,H) )$. Note that since $n$ is fixed in this context, we have $\lim_{k \to \infty} \ell/k = 1$.

By our choice of $K$ and the fact that $B_k(Y) \subset B_k(X,H)$, we have that
\begin{align*}
 \mu(B_k(X,H)) & = \sum_{w \in B_k(X,H)} \mu(w) \\
 & \geq  \sum_{w \in B_k(X,H)} K^{-1} e^{-P k + S_k f(w)} \\
 & = K^{-1} e^{-P k} \sum_{w \in B_k(X,H)} e^{S_k f(w)} \\
 & \geq K^{-1} e^{-P k} \sum_{w \in B_k(Y)} e^{S_k f(w)}.
\end{align*}
It follows that
\begin{align*}
\frac{1}{k} \log \mu(B_k(X,H)) \geq - P + \frac{1}{k} \log \Lambda_k(Y) - \frac{1}{k} \log K,
\end{align*}
and letting $k$ tend to infinity, we see that
\begin{equation} \label{Eqn:ZiggyLower}
\liminf_{k} \frac{1}{k} \log \mu(M_{\ell}) \geq -P + \liminf_{k\to \infty} \frac{1}{k} \log \Lambda_k(Y) = - P + P_Y(f).
\end{equation}
Similarly, we have the following upper bound:
\begin{equation} \label{Eqn:ZiggyUpper}
\limsup_k \frac{1}{k} \log \mu(M_{\ell}) \leq -P + \limsup_{k \to \infty} \frac{1}{k} \log \Biggl( \sum_{w \in B_k(X,H)} e^{S_k f(w)} \Biggr).
\end{equation} 

Comparing the bounds in (\ref{Eqn:ZiggyLower}) and (\ref{Eqn:ZiggyUpper}) , we see that in order to finish the proof, it suffices to show that
\begin{equation} \label{Eqn:Sophie}
\limsup_{k \to \infty} \frac{1}{k} \log \Biggl( \sum_{w \in B_k(X,H)} e^{S_n f(w)} \Biggr) \leq P_Y(f).
\end{equation}
To get this inequality, we use ideas from \cite{LedWalters} to find an invariant measure $\nu$ supported on $Y$ such that $ h(\nu) + \int f d\nu$. The measure $\nu$ is obtained as follows.

For $k \geq 1$, suppose $B_k(X,H) = \{w^k_1, \dots, w^k_{m_k}\}$. Let $x^k_i$ be in $[w^k_i]$ such that $S_k f(w^k_i) = S_mf(x^k_i)$ (which exists by compactness and continuity). Then let
\begin{align*}
\mu_k & = \frac{ \sum_{j=1}^{m} e^{S_kf(x^k_j)} \delta_{x^k_j} }{  \sum_{j=1}^{m} e^{S_kf(x^k_j)} } \\
\nu_k & = \frac{1}{k} \sum_{j=0}^{k-1} S^j \mu_k.
\end{align*}
Since the space of Borel probability measures on $X$ is weak$^*$ compact, there is a subsequence $(k_j)$ such that $\nu_{k_j} \to \nu$ and along which the $\limsup$ in (\ref{Eqn:Sophie}) is obtained. Note that $\nu$ is in $M(X,S)$. Furthermore, we have that
\begin{align*}
\log \Biggl( \sum_{w \in B_k(X,H)}  e^{S_k f(w)} \Biggr) & = \log \Biggl( \sum_{j=1}^m  e^{S_k f(x^k_j)} \Biggr) \\
& = H_{\mu_k} ( \xi^k ) + \int f d\nu_k,
\end{align*}
where $\xi$ is the natural partition of $X$ according to the symbol in the zero coordinate, and $\xi^k$ is the $k$-fold join of $\xi$. Arguing as in Proposition 3.6 of \cite{LedWalters}, we obtain that
\begin{equation} \label{Eqn:Ramona}
 \limsup_{k \to \infty} \frac{1}{k} \log \Biggl( \sum_{w \in B_k(X,H)}  e^{S_k f(w)} \Biggr) \leq h(\nu) + \int f d\nu.
\end{equation}

Now we claim that $\nu$ is supported on $Y$. Let $[w]$ be a cylinder set in $X$ such that $Y \cap [w] = \varnothing$ and $w$ has length $N$. We show that $\nu([w])=0$. Since $Y \cap [w] = \varnothing$ and since $X$ is compact, there must exist $k_0$ such that for all $k \geq k_0$ and for all $u$ in $B_k(X,H)$, it holds that $w$ is not a subword of $u$. Then for $u$ in $B_k(X,H)$, $x$ in $[u]$, and $j = 0, \dots , k-N$, we have that $S^j(x) \notin [w]$. Hence $S^j \mu_k ( w ) = 0 $ for $j = 0, \dots, k-N$, and therefore
\begin{align*}
 \nu_k(w) = \frac{1}{k} \sum_{j=0}^{k-1} S^j \mu_k (w) = \frac{1}{k} \sum_{j=k-N+1}^{k-1} S^j \mu_k (w) \leq \frac{N}{k}.
\end{align*}
Letting $k$ tend to infinity along the subsequence $(k_j)$, we obtain that $\nu([w]) = 0$, as desired. Hence $\nu$ is supported on $Y$.

Then by (\ref{Eqn:Ramona}) and the variational principle for $P_Y(f)$, we have that
\begin{align*}
 \limsup_{k \to \infty} \frac{1}{k} \log \Biggl( \sum_{w \in B_k(X,H)}  e^{S_k f(w)} \Biggr) \leq h(\nu) + \int f d\nu \leq P_Y(f),
\end{align*}
which establishes (\ref{Eqn:Sophie}) and finishes the proof.
\end{proof}

\subsection{Proof of Theorem \ref{Thm:EscapeRate}}

Having established Proposition \ref{Prop:Crayola}, we are now in a position to prove Theorem \ref{Thm:EscapeRate}. The proof simply uses Proposition \ref{Prop:Crayola} to reduce Theorem \ref{Thm:EscapeRate} to Theorem \ref{Thm:Pressure}.

\vspace{2mm}

\begin{PfOfThmEscapeRate}

Let $X$ be a non-trivial mixing SFT, $f : X \to \mathbb{R}$ a H\"{o}lder continuous potential with associated Gibbs measure $\mu$, and $\gamma_0 = \gamma_0(X,f)$ as in Theorem \ref{Thm:Pressure}. Let $\alpha \in (\gamma_0,1]$. Let $\epsilon >0$. By Theorem \ref{Thm:Pressure}, there exists $\rho >0$ such that for all large enough $n$,
\begin{equation*}
\mathbb{P} \biggl( \bigl| P_{Y_n}(f) - ( P_X(f) + \log(\alpha) ) \bigr| \geq \epsilon \biggr) < e^{- \rho n}.
\end{equation*}
Observe that $Y_n$ is the survivor set of the open system $(X,\sigma,H_n)$. Then by Proposition \ref{Prop:Crayola}, we have
\begin{equation*}
- \varrho(\mu : H_n) = P_X(f) - P_{Y_n}(f). 
\end{equation*}
Then for all large enough $n$, we see that
\begin{align*}
 \mathbb{P} \biggl( \bigl| \varrho(\mu : H_n) - \log(\alpha) \bigr| \geq \epsilon \biggr) = \mathbb{P} \biggl(  \bigl| P_{Y_n}(f) - ( P_X(f) + \log(\alpha) ) \bigr|  \geq \epsilon \biggr) < e^{-\rho n},
\end{align*}
as was to be shown.
\end{PfOfThmEscapeRate}

\bibliographystyle{amsplain}
\bibliography{randomHolesRefs}

\end{document}